\newlength{\ancho}
\newcommand{\rel}[1]{\mathrel{\mathcal{R}\left(#1\right)}}
\newcommand{\sem}[1]{\llbracket #1 \rrbracket}
\newcommand{\modal}[1]{\langle #1 \rangle}
\newcommand{\qmodalgq}[2]{[#1]_{{\geq} #2}}
\newcommand{\bisim}{\sim}
\newcommand{\sbisim}{\bisim_{\rm s}}
\newcommand{\ebisim}{\bisim_{\rm e}}
\newcommand{\tbisim}{\bisim_{\rm t}}
\newcommand{\nlmp}[1]{\mathbf{#1}}
\newcommand{\iso}{\cong}
\renewcommand{\emptyset}{\varnothing}
\newcommand{\vacia}{\epsilon}
\newcommand{\et}{\mathrel{\&}}
\newcommand{\dia}{\lozenge}
\newcommand{\calS}{\Sigma}
\newcommand{\calU}{\Xi}
\newcommand{\calL}{\mathscr{L}}
\newcommand{\Pow}{\mathsf{Pow}}
\newcommand{\Q}{\mathbb{Q}}
\newcommand{\N}{\mathbb{N}}
\newcommand{\B}{\mathcal{B}}
\newcommand{\E}{\mathbf{E}}
\newcommand{\bB}{\mathbf{B}}
\newcommand{\bA}{\mathbf{A}}
\renewcommand{\>}{\rangle}
\newcommand{\pre}[2]{\langle #1\rangle #2}
\newcommand{\keywords}[1]{{\renewcommand{\thefootnote}{\relax}\footnotetext{\emph{Keywords:}
    #1}}}
\newcommand{\ent}{\Rightarrow}
\newcommand{\sii}{\Leftrightarrow}
\renewcommand{\phi}{\varphi}
\renewcommand{\th}{\theta}
\newcommand{\Lda}{\Lambda}
\newcommand{\lda}{\lambda}
\newcommand{\sig}{\ensuremath{\sigma}}
\newcommand{\Sig}{\ensuremath{\boldsymbol\Sigma}}
\newcommand{\dist}{\Delta}
\newcommand{\Tree}{{\mathrm{Tr}_\N}}
\newcommand{\NWO}{\mathit{NWO}}
\newcommand{\Suc}{{\N^*}}
\newcommand{\ap}{\mathord{^\smallfrown}}
\newcommand{\pred}{\prec}
\newcommand{\bpred}{\mathrel{\overline{\prec}}}
\newcommand{\npred}{\mathrel{\nprec}}
\newcommand{\disr}{\triangleright}
\newcommand{\om}{\ensuremath{\omega}}
\newcommand{\sbt}{{\,\begin{picture}(-1,1)(-1,-1)\circle*{2}\end{picture}\ }}
\newcommand{\Tfont}[1]{\mathsf{#1}}
\newtheorem{theorem}{Theorem}
\newtheorem{lemma}[theorem]{Lemma}
\newtheorem{prop}[theorem]{Proposition}
\newtheorem{corollary}[theorem]{Corollary}
\theoremstyle{definition}
\newtheorem{definition}[theorem]{Definition}
\newtheorem{example}[theorem]{Example}
\theoremstyle{remark}
\newtheorem*{ack}{Acknowlegdements}
\begin{document}
\title{Bisimilarity is not Borel}
\author{Pedro S\'{a}nchez Terraf\thanks{The author was partially
    supported by CONICET, ANPCyT project PICT 2012-1823, SeCyT-UNC project 05/B284, and EU 7FP grant agreement 295261 (MEALS). Part of
    this work was presented at Dagstuhl Seminar 12411 on \emph{Coalgebraic
      Logics.}}}
\maketitle
\keywords{measurable labelled transition system, non-deterministic labelled Markov process,
modal logic, Borel hierarchy. \emph{MSC 2010:} 03B70; 03E15; 28A05. \emph{ACM class:} F.4.1; F.1.2.}
\begin{abstract}
We prove that the relation of bisimilarity between 
countable labelled transition systems is $\Sig_1^1$-complete (hence
not Borel), by reducing the
set of  non-wellorders over the natural numbers continuously to it. 

This has an impact on the theory of probabilistic and nondeterministic processes 
over uncountable spaces,  since logical characterizations of 
bisimilarity (as, for instance, those based on the unique structure
theorem for analytic spaces) require a  
countable logic whose formulas have measurable semantics. Our reduction shows 
that such a logic does not exist in the case of image-infinite
processes.
\end{abstract}

\section{Introduction}
Markov decision processes over continuous state spaces are an
appropriate framework to study and formalize systems that involve
continuously valued variables such as those arising in physics, biology,
and economics; and where some of those variables are
known only in a probabilistic way.

In this direction, \emph{labelled Markov processes (LMP)} were developed in
\cite{Desharnais,DEP} by Desharnais et alter. A LMP has  a labelled set of \emph{actions}
that encode interaction with the environment; thus LMP are a
reactive model in which there are different transition
subprobabilities for each action. In this model, uncertainty
is (only) considered to be probabilistic; therefore, LMP can be
regarded as generalization of deterministic processes. Intimately
related to LMP are \emph{stochastic relations}
\cite{doberkat2007stochastic}; LMP are exactly the
stochastic Kripke \emph{frames}. It will be convenient, in search for motivation, to draw a
panorama of the problems addressed by the present work in the context
of LMP.

There are several ways to face the problem of `equivalence of
behavior' or \emph{bisimilarity} for
LMP. One possible but rather arbitrary way to classify them is as
\emph{relational}, \emph{coalgebraic} and \emph{logical}, and at the
core of the matter that concerns us are the connections  between these
different roads.  Each one has its own advantages and sharp edges.

In the relational way, one uses some natural
generalizations to continuous spaces of  \emph{probabilistic  bisimilarity} as defined by
Larsen and Skou \cite{LarsenSkou}. Hence bisimulations and the
relation of bisimilarity are simply binary relations on the base
space. This way to define bisimulations is the standard one for Kripke
models and will play a central role in this paper. Another, perhaps
more important observation is that this relational view is
\emph{internal}: it only works inside a single process. When one wants to
compare states in two different processes \emph{relationally}, it's necessary  to construct
their direct sum first. Since bisimulations
in this sense have no structure (they're plain sets of pairs) facts as
the transitivity of the relation of bisimilarity are
straightforward.

In the coalgebraic view, 
\cite{Rutten00,doberkat2009stochastic} there are two options for
describing behavior of processes: coalgebraic 
bisimilarity is the existence of a span of zigzag morphisms and the
concept of \emph{behavioral equivalence}, which is given by a cospan of
morphisms. Here the problem of transitivity is not trivial. It can be
shown by functorial manipulations that behavioral equivalence is transitive. But in the case of spans, this is a deep
theorem. There are two approaches to this problem. One is to apply a
technical lemma by Edalat \cite{Edalat} which ensures the existence of
semipullbacks 
in the category of LMP over analytic spaces; that is, every cospan can
be completed to a commutative square. The second one allows the
construction to be carried out without leaving the realm of Polish
spaces, in case one starts with a cospan of LMP over Polish spaces. This
was achieved by Doberkat \cite{DBLP:journals/siamcomp/Doberkat05} by using a selection
argument. Coalgebraic bisimilarity is a refinement of the relational
view, since the mediating process (the ``vertex'' of the span) can be
regarded as binary relation carrying a LMP structure. Note, however,
that the passage to the relational view is not completely smooth, since
\cite[Example 1]{Pedro20111048} shows that direct sum is
not always compatible with coalgebraic bisimilarity. 

Finally, in the logical view one fixes 
a variant of Hennessy-Milner  modal
logic. Formulas of the modal logic can be regarded as tests
on a process and we call
two states \emph{logically equivalent} if they satisfy exactly the same
formulas. We don't put logical equivalence under the ``relational
approach'' since the main ingredients are the logical formulas;
moreover, logic is \emph{external}. There
is an alternative but equivalent view, called ``event
  bisimulation'' \cite{coco} where emphasis is put on families of measurable
subsets of the state space. The connection between these two is that
the \emph{semantics (or validity sets) of Hennessy-Milner formulas are
measurable sets} for each interpretation (process). This is a recurrent
theme that will be exploited in 
our results. There is a generalization of this logical/event approach
through the investigation of the lifting of countably generated
equivalence relations to the space of probabilities \cite{Doberkat2007638}.

As in older and related areas as Kripke frames and models, one of the main
problems about LMP is to make a link between the logical and the
other approaches, specially to relational and coalgebraic
bisimilarity. For the former, the problem of \emph{logical
  characterization of bisimilarity} on a class of processes consist in proving that
(relational) bisimilarity is the same as logical equivalence for each
process in the class.  One of the cleanest way to do this is to apply the
\emph{unique structure theorem}  for   analytic spaces, as it
was done in the work of Danos et 
al.~\cite{coco}. This works for the class of LMP with analytic state
spaces, but it can be shown that both notions actually differ in the
class of LMP over general measurable spaces, and that coalgebraic
bisimilarity is not even transitive \cite{Pedro20111048} (note that,
trivially, logical equivalence is).

Our interest lies in models that include both probability and internal
nondeterminism. These arise
naturally, e.g., by abstraction or underspecification of LMP. In the discrete case, the
class of 
\emph{probabilistic automata} is an example. Over uncountable state
spaces, the common generalization of LMP and probabilistic automata is
given by
\emph{nondeterministic labelled Markov processes (NLMP)}
\cite{DWTC09:qest,Wolovick}. NLMP allow, for each state $s$  and each
action $a$, a (possibly
infinite) set of probabilistic behaviors $\Tfont{T}_a(s)$. Deterministic NLMP (i.e., for which
$\Tfont{T}_a(s)$ is a singleton for each $a,s$) are essentially the same as
LMP. (It's worth to mention a very different
approach to underspecification, using 
super-additive functions, proposed in
\cite{DBLP:journals/iandc/DesharnaisLT11}.)

In \cite{DWTC09:qest,D'Argenio:2012:BNL:2139682.2139685} the problem
of defining appropriate notions of bisimulation and finding logical
characterizations for bisimulation of NLMP was addressed. 
 It turns out
that there are three
different notions of bisimilarity for these processes: traditional, state-based and
event-based.  The first two are ``relational'' in nature, and for
deterministic NLMP they
collapse to state bisimilarity. (It should  be noted that a
neat coalgebraic presentation of NLMP is still missing, so we won't be
concerned with the coalgebraic approach in this work.) Event bisimilarity is
analogous to the concept for LMP bearing the same name and hence it is
characterized by a logic, though  one far more complex than
Hennessy-Milner's. It has two levels and uncountably many formulas in
the general case. 

Indeed, we witness a similar  phenomenon as in
labelled transition systems (LTS) vs.\ modal logic, where some logical
operators (e.g. conjunction) must have arity at 
least as the branching of the process. In our case we can do by just
using countable conjunctions, in  spite the  sets $\Tfont{T}_a(s)$ may have
the cardinality of the continuum. In any case, by using infinitary
operators one is lead to an uncountable logic. 

Nevertheless, the arguments used in \cite{coco}  
can be generalized to encompass image-finite NLMP (i.e., having all  the  sets $\Tfont{T}_a(s)$
finite) over analytic state spaces. Actually, a proof strategy can be found in
\cite{DWTC09:qest,Celayes}: every countable `measurable' logic $\calL$ 
satisfying certain local restrictions must characterize
bisimilarity. (Here we call a logic \emph{measurable} if the validity set
of each formula is a measurable subset of the state space.) Both
countability and measurability requirements are necessary for the
proof to work. As far as we know, all approaches to the problem of
logical characterization of bisimilarity need a logic satisfying these
two properties.

Several examples in
\cite{DWTC09:qest,D'Argenio:2012:BNL:2139682.2139685} show that in the
case of uncountable branching, our logic does not characterize neither
of the relational bisimilarities considered, and that they are
different. So the remaining case is the one of countably infinite
branching. In this paper we show that the relation of bisimilarity is not Borel
in an appropriate countably branching process $\nlmp F$ having a
Polish state space, and therefore 
we prove that there is no 
countable measurable logic that characterizes bisimilarity in any
class of NLMP containing $\nlmp F$, thus banishing hope for using the
current techniques for proving logical characterization. We will do this in two steps:
first we'll see that the relation of bisimilarity on the space $\Tree$ of all
trees on $\N$  is a non Borel set (actually, it is
analytic-complete). We will then use $\Tree$ to build  the state
space of $\nlmp F$, providing a NLMP structure.

In the next section
we review some of the known results on NLMP, describing  the
available notions of bisimulation. Most calculations in the paper will
be carried on a NLMP where the sets $\Tfont{T}_a(s)$ consist entirely of point
masses (Dirac's deltas). This simpler model has another, more
practical, presentation which is
essentially a LTS over a measurable space (Section \ref{sec:measurable-lts}). In
Section 3 we use some machinery of sequence spaces and unwinding of
labelled transition systems to assess the complexity of the relation
of bisimilarity. The final section contains some concluding remarks.

\section{Review of NLMP}
\subsection{Basic definitions}
All of the material of this section appears in
\cite{D'Argenio:2012:BNL:2139682.2139685}. Let $(S,\calS)$ be a
measurable space. The set $\dist(S)$  of probability measures on
$(S,\calS)$ has a natural \sig-algebra $\dist(\calS) =
\sig(\{\dist^{\geq q}(Q) : q\in \Q, Q\in \calS\})$, where $\dist^{\geq
  q}(Q)=\{\mu\in\dist(S) : \mu(Q)\geq q\}$. This is the least
\sig-algebra making evaluation $\mu\mapsto\mu(Q)$ measurable.

Recall that a Markov kernel
on $(S,\calS)$ is a measurable 
map $\Tfont{T}:(S,\calS)\to (\dist(S),\dist(\calS))$. The following definitions generalize
this concept by enlarging the codomain to the family
of  all measurable sets of
probability measures ($\dist(\calS)$), and constructing a \sig-algebra
for this family in order to be able to say that $\Tfont{T}$ is
measurable. In this section, the $\Lda$ appearing in
Definition~\ref{def:hit-salg} will be $\dist(\calS)$; in
Section~\ref{sec:measurable-lts} the choice of $\Lda$ will be different. 
\begin{definition}\label{def:hit-salg}%
  Let $X$ be a set and let $\Lda$ be some family of sets. $H(\Lda)$ is the least
  $\sigma$-algebra on $\Lda$ containing all sets
  $H_{\xi} \doteq
    \{ \zeta\in\Lda :
                   \zeta\cap\xi \neq\emptyset \}$
  with $\xi\in\Lda$.
\end{definition}
\begin{definition}\label{def:NLMP}%
  A \emph{nondeterministic labelled Markov process} (NLMP) is
  a tuple $\nlmp S = (S,\Sigma, \{\Tfont{T}_a : a\in L \})$ where $\Sigma$ is a
  $\sigma$-algebra on the set of states $S$, and for each label $a\in
  L$, $\Tfont{T}_a : (S,\Sigma) \to (\Delta(\Sigma),H(\Delta(\Sigma)))$ is
  measurable. We call $\nlmp S$ \emph{image-finite (image-countable)} if all
  the sets $\Tfont{T}_a(s)$ are finite (countable). NLMP that are not
  image-finite (image-countable) are called \emph{image-infinite} (\emph{image-uncountable}). 
\end{definition}
The motivation for the previous definitions is that we want the event
``there exists a probabilistic behavior from $s$ such that
\dots'' to be measurable; we want this in order to be able to calculate the probability of
such event (cf. the semantics of the logic below).

Some notation concerning
binary relations will be needed to define bisimulations. Let
$R$ a binary relation over $S$. 
A set $Q$ is \emph{$R$-closed} if $Q\ni x\mathrel{R} y$ implies $y\in
Q$. $\calS(R)$ is the \sig-algebra of $R$-closed sets in $\calS$. If
$\mu, \mu'$ are measures defined on $\calS$, we write $\mu \mathrel{R}
\mu'$ if they coincide in $\calS(R)$. Lastly, 
let $\calU$ be a subset of $\Pow(S)$, the powerset of $S$. The relation $\rel{\calU}$
is given by:
\[(s,t)\in{\rel{\calU}} \quad \iff \quad \forall Q\in \calU: s\in Q \sii
t\in Q.\]
\begin{definition}
  \begin{enumerate}
  \item \label{def:nlmp_eb}%
    An \emph{event bisimulation} on a NLMP $(S,\Sigma,\{\Tfont{T}_a : a\in
    L\})$ is a sub-$\sigma$-algebra $\Lambda$ of $\Sigma$ such that
    $\Tfont{T}_a:(S,\Lambda)\to(\dist(\Sigma),H(\dist(\Lambda)))$ is measurable
    for each $a\in L$. We also say that a
    relation $R$ is an event bisimulation if there is an event
    bisimulation $\calU$ such that $R=\mathcal{R}(\calU)$.
  \item \label{def:nlmp_sb}%
    A relation $R\subseteq S\times S$ is a \emph{state bisimulation}
    if it is symmetric and for all $a\in L$, $s\mathrel{R}t$ implies
    $\forall\xi\in\Delta(\Sigma(R)) :
    \Tfont{T}_a(s)\cap\xi\neq\emptyset \iff \Tfont{T}_a(t)\cap\xi\neq\emptyset$.
  \item \label{def:nlmp_ls_sb}%
    A relation $R$ is a \emph{traditional bisimulation} if it is
    symmetric and for all $a\in L$, $s\mathrel{R}t$ implies that
    for all 
    $\mu\in \Tfont{T}_a(s)$ there exists $\mu'\in \Tfont{T}_a(t)$ such that
    $\mu\mathrel{R}\mu'$.
  \end{enumerate}
  We say that $s,t\in S$ are traditionally (resp.\ state-, event-) \emph{bisimilar}, denoted
  by $s \tbisim t$ ($s \sbisim t$, $s \ebisim t)$, if there is a
  traditional (state, event) bisimulation $R$ such that $s\mathrel{R}t$.
\end{definition}
We want to stress the fact that each notion of bisimulation/bisimilarity is
defined relative to a particular NLMP. 
Event bisimulation is a
straightforward generalization of the same concept for LMP, and it is
the one that is most ``compatible'' with the measurable structure of the
base space. For LMP, it is equivalent to the existence of a cospan of
morphisms; see \cite{coco}. Traditional bisimulation is in a sense the
most faithful generalization of both probabilistic bisimulation by
Larsen and Skou and the standard notion of bisimulation for non
deterministic processes, e.g., LTS. Finally, state bisimilarity is a
good trade-off between the other two, since it is generally finer than event
bisimilarity but it is a little more respectful to the measurable
structure than the traditional version; in this case, we only ask
transition sets to \emph{hit} the same $R$-closed sets of measures, and this is a
weaker requirement than that of traditional bisimulations. We refer the reader to 
\cite{D'Argenio:2012:BNL:2139682.2139685} for further discussion on
these notions of bisimilarity.%
\footnote{In later works different (and we hope better) names for these
  notions are used: \emph{state} and \emph{hit} bisimulations, instead
  of traditional and state bisimulations, respectively.}%


To close this section, we introduce the logic $\calL$. Consider two kinds
of formulas: one that is interpreted on states, and another that is
interpreted on measures. 
\begin{eqnarray*}
  \phi & \ \equiv \ &\textstyle
    \top \ \mid \ \phi_1\land\phi_2 \ \mid \ \modal{a}\psi \\
  \psi & \ \equiv \ &\textstyle
    \bigvee_{i\in I} \psi_i \ \mid \ \neg\psi \ \mid \ \qmodalgq{\phi}{q}
\end{eqnarray*}
where $a\in L$, $I$ is a countable index set, and
$q\in\Q\cap{[0,1]}$. We denote by $\calL$ the set of all formulas
generated by the first production. The semantics of the logic is given
relative to a NLMP  $(S,\Sigma, \{\Tfont{T}_a : a\in L \})$. 
\begin{align*}
  & \sem{\top} = S
  \quad  & \
  & \sem{\textstyle\bigvee_{i\in I} \psi_i} =  \textstyle\bigcup_i \sem{\psi_i}  \\
  & \sem{\phi_1\land\phi_2} =  \sem{\phi_1}\cap\sem{\phi_2}
  \quad  & \
  & \sem{\neg\psi} = \dist(S)\setminus\sem{\psi}  \\
  & \sem{\modal{a}\psi} = \Tfont{T}_a^{-1}(H_{\sem{\psi}})
  \quad  & \
  & \sem{\qmodalgq{\phi}{q}} = \Delta^{\geq q}(\sem{\phi})
\end{align*}
where $\sem{\chi}$ denotes the validity set (or extension) of $\chi$. We may explain a
little bit the last to clauses. Expanding the one with the modality,
we obtain
\[\sem{\modal{a}\psi} = \{s\in S : \exists (\mu \in \Tfont{T}_a(s) \cap
\sem{\psi})\},\]
so that this is the set of all states such that under action $a$,
there is a transition to a measure $\mu$ satisfying $\psi$. The test
on measures expands to
\[\sem{\qmodalgq{\phi}{q}} =\{\mu \in \dist(S) : \mu(\sem{\phi})\geq
q\},\]
so a measure $\mu$ satisfies $\qmodalgq{\phi}{q}$ if it assigns
probability greater or equal than $q$ to the set of states satisfying
$\phi$.
\begin{example}
  In case the NLMP $\nlmp S = (S,\Sigma, \{\Tfont{T}_a : a\in L \})$ is deterministic,
  meaning $\Tfont{T}_a(s)=\{k_a(s)\}$ is a singleton for each $a$ and $s$, it
  can be proved that $k_a$ are Markov kernels and then $\nlmp S$ is
  essentially a LMP. We may easily codify the logic used for these
  processes \cite{Desharnais} in
  $\calL$ as synthetic sugar; we should only worry about the modality
  $\<a\>_q$:
  \[\sem{\<a\>_q\phi} =  \{s \in S : k_a(s)(\sem{\phi}) \geq q\} =
  \sem{\modal{a}{\qmodalgq{\phi}{q}}}.\] 
\end{example}

It can be proved
by induction that all the sets $\sem{\chi}$ are measurable in the
respective spaces by using $H(\dist(\Sigma))$-measurability. Let $\sem{\calL}\doteq\{\sem{\phi} : \phi \in\calL\}$.
\begin{theorem}\label{thm:charaterization_event_bisimulation}%
  The logic $\calL$ completely characterizes event bisimulation.
  In other words, ${\rel{\sem\calL}}={\ebisim}$.
\end{theorem}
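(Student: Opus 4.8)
The plan is to establish the two inclusions ${\ebisim}\subseteq{\rel{\sem\calL}}$ and ${\rel{\sem\calL}}\subseteq{\ebisim}$ by showing that $\Lambda\doteq\sigma(\sem\calL)$, the \sig-algebra on $S$ generated by the validity sets of state formulas, is the \emph{least} event bisimulation. I would begin with the elementary remark that $\rel{\calU}=\rel{\sigma(\calU)}$ for every $\calU\subseteq\Pow(S)$: for a fixed pair $(s,t)$ the family $\{Q\subseteq S : s\in Q \Leftrightarrow t\in Q\}$ is a \sig-algebra, so $\calU$ separates $(s,t)$ exactly when $\sigma(\calU)$ does. In particular ${\rel{\sem\calL}}={\rel{\Lambda}}$, and it suffices to prove (i) that $\Lambda$ is an event bisimulation, and (ii) that $\Lambda\subseteq\Lambda_0$ for every event bisimulation \sig-algebra $\Lambda_0$.

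The technical core is the identity $\dist(\Lambda)=\{\sem\psi : \psi \text{ a measure formula}\}$, where $\dist(\Lambda)=\sigma(\{\dist^{\geq q}(Q): q\in\Q,\ Q\in\Lambda\})$. One inclusion is a structural induction on $\psi$: $\sem{\qmodalgq\phi q}=\dist^{\geq q}(\sem\phi)$ is a generator of $\dist(\Lambda)$ since $\sem\phi\in\Lambda$, and $\dist(\Lambda)$ being a \sig-algebra absorbs the clauses for $\neg$ and for countable $\bigvee$. For the reverse inclusion one first observes that $\{\sem\psi : \psi\}$ is \emph{itself} a \sig-algebra on $\dist(S)$ — it contains $\sem{\qmodalgq\top 0}=\dist(S)$ and is closed under complement and countable union precisely because the measure-level fragment of the logic has $\neg$ and countable $\bigvee$ — so it is enough to see that every generator $\dist^{\geq q}(Q)$ of $\dist(\Lambda)$, with $Q\in\Lambda$, lies in it; this I would obtain by checking that $\{Q\in\Sigma : \forall q\in\Q,\ \dist^{\geq q}(Q)\in\{\sem\psi : \psi\}\}$ is a Dynkin class (it contains $S$, is closed under complement via $\mu(Q^{\comp})=1-\mu(Q)$, and under countable disjoint union via countable additivity) containing the $\pi$-system $\sem\calL$, hence by the $\pi$-$\lambda$ theorem contains $\sigma(\sem\calL)=\Lambda$. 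This last verification, which mixes a Dynkin argument with the behaviour of the evaluation functionals $\mu\mapsto\mu(Q)$, is the step I expect to require the most care.

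Granting the identity, (i) is almost immediate: for any measure formula $\psi$ we have $\Tfont{T}_a^{-1}(H_{\sem\psi})=\sem{\modal a\psi}\in\sem\calL\subseteq\Lambda$, and since every $\xi\in\dist(\Lambda)$ is of the form $\sem\psi$, every generator $H_\xi$ of $H(\dist(\Lambda))$ satisfies $\Tfont{T}_a^{-1}(H_\xi)\in\Lambda$; thus $\Tfont{T}_a\colon(S,\Lambda)\to(\dist(\Sigma),H(\dist(\Lambda)))$ is measurable for each $a$, $\Lambda$ is an event bisimulation, and therefore ${\rel{\sem\calL}}={\rel{\Lambda}}\subseteq{\ebisim}$.

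For (ii), I would prove $\sem\phi\in\Lambda_0$ for every state formula $\phi$ (together with $\sem\psi\in\dist(\Lambda_0)$ for every measure formula $\psi$) by a simultaneous structural induction: the clauses $\top$, $\land$, $\bigvee$, $\neg$, $\qmodalgq\phi q$ are forced by $\Lambda_0$ and $\dist(\Lambda_0)$ being \sig-algebras, with $\dist(\Lambda_0)$ containing every $\dist^{\geq q}(Q)$, $Q\in\Lambda_0$; and the clause $\modal a\psi$ uses exactly the measurability of $\Tfont{T}_a\colon(S,\Lambda_0)\to(\dist(\Sigma),H(\dist(\Lambda_0)))$ applied to the generator $H_{\sem\psi}$, which is legitimate since $\sem\psi\in\dist(\Lambda_0)$ by the induction hypothesis. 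Then $\sem\calL\subseteq\Lambda_0$, so $\Lambda=\sigma(\sem\calL)\subseteq\Lambda_0$ and hence ${\rel{\Lambda_0}}\subseteq{\rel{\Lambda}}={\rel{\sem\calL}}$; as $s\ebisim t$ means $s\rel{\Lambda_0}t$ for some such $\Lambda_0$, this gives ${\ebisim}\subseteq{\rel{\sem\calL}}$, and together with (i) the theorem follows.
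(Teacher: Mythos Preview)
The paper does not prove this theorem: Section~2.1 opens with ``All of the material of this section appears in \cite{D'Argenio:2012:BNL:2139682.2139685}'', and Theorem~\ref{thm:charaterization_event_bisimulation} is simply quoted from that reference without argument. Your proposal is correct and is in fact the approach taken in the cited paper: one shows that $\sigma(\sem\calL)$ is the least event-bisimulation sub-$\sigma$-algebra, the key computation being the identification of $\dist(\sigma(\sem\calL))$ with the set of extensions of measure formulas. The Dynkin step you single out is the only place requiring real work; closure of your class $\mathcal D$ under countable disjoint unions amounts to expressing $\{\mu:\sum_n\mu(Q_n)\geq q\}$ as a countable Boolean combination of the sets $\dist^{\geq r}(Q_i)$, and this goes through by approximating the partial sums with rational tuples $(r_1,\dots,r_N)$ summing to at least $q-\epsilon$. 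Everything else in your outline is routine.
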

\begin{theorem}\label{theorem:logic_is_sound}
  ${\tbisim} \subseteq {\sbisim} \subseteq {\ebisim} = {\rel{\sem\calL}}$.
\end{theorem}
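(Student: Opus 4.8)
The equality ${\ebisim} = {\rel{\sem\calL}}$ is precisely Theorem~\ref{thm:charaterization_event_bisimulation}, so the task reduces to the two inclusions, and in each case the plan is to convert a witness of the finer notion into a witness of the coarser one.

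For ${\tbisim}\subseteq{\sbisim}$ I would prove the stronger statement that every traditional bisimulation $R$ is already a state bisimulation. The auxiliary fact needed is that each $\xi\in\dist(\Sigma(R))$ is closed under the measure-level relation $R$, i.e.\ $\mu\in\xi$ and $\mu\mathrel{R}\mu'$ imply $\mu'\in\xi$. This holds because the generating sets $\dist^{\geq q}(Q)$ with $Q\in\Sigma(R)$ enjoy this property (since $\mu$ and $\mu'$ coincide on $\Sigma(R)$, so $\mu(Q)=\mu'(Q)$), and the subsets of $\dist(S)$ closed under $R$ form a $\sigma$-algebra, using that ``coinciding on $\Sigma(R)$'' is an equivalence relation, in particular symmetric. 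Granting this, symmetry of $R$ reduces the state-bisimulation clause to a single implication: given $s\mathrel{R}t$ and $\mu\in\Tfont{T}_a(s)\cap\xi$, the traditional clause yields $\mu'\in\Tfont{T}_a(t)$ with $\mu\mathrel{R}\mu'$, and closure of $\xi$ gives $\mu'\in\xi$, so $\Tfont{T}_a(t)\cap\xi\neq\emptyset$. Hence any pair related by a traditional bisimulation is state-bisimilar.

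For ${\sbisim}\subseteq{\ebisim}$ I would show that, for a state bisimulation $R$, the $\sigma$-algebra $\Sigma(R)$ of $R$-closed sets is an event bisimulation, and that $R\subseteq\mathcal{R}(\Sigma(R))$. To check measurability of $\Tfont{T}_a\colon(S,\Sigma(R))\to(\dist(\Sigma),H(\dist(\Sigma(R))))$ it is enough to examine the generators $H_\xi$, $\xi\in\dist(\Sigma(R))$, since the family of sets whose $\Tfont{T}_a$-preimage lies in $\Sigma(R)$ is a $\sigma$-algebra; and $\Tfont{T}_a^{-1}(H_\xi)=\{s:\Tfont{T}_a(s)\cap\xi\neq\emptyset\}$ belongs to $\Sigma$ (because $\dist(\Sigma(R))\subseteq\dist(\Sigma)$ and $\Tfont{T}_a$ is $\Sigma$-measurable by the NLMP axioms) and is $R$-closed (this is exactly the defining clause of a state bisimulation applied to $\xi\in\dist(\Sigma(R))$), hence lies in $\Sigma(R)$. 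Finally, symmetry of $R$ makes every $Q\in\Sigma(R)$ closed under $R$ and under $R^{-1}$ simultaneously, so $s\mathrel{R}t$ forces $s\in Q\iff t\in Q$; thus $(s,t)\in\mathcal{R}(\Sigma(R))$, an event-bisimulation relation, whence $s\ebisim t$.

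The argument is ultimately a matter of chasing definitions; the point that requires genuine care — and which I expect to be the only real obstacle — is keeping the three nested layers of $\sigma$-algebras correctly aligned ($\Sigma$ versus $\Sigma(R)$ on states, $\dist(\Sigma)$ versus $\dist(\Sigma(R))$ on measures, and the hit-$\sigma$-algebra $H(\cdot)$ on top), in particular establishing the measure-closure lemma above, since it is precisely what makes the ``hit the same $R$-closed sets of measures'' formulation mesh with the traditional clause phrased through $\mu\mathrel{R}\mu'$.
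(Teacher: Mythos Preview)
Your argument is correct, but there is nothing in the paper to compare it against: the paper merely \emph{states} Theorem~\ref{theorem:logic_is_sound} (and Theorem~\ref{thm:charaterization_event_bisimulation}) as review material imported from \cite{D'Argenio:2012:BNL:2139682.2139685}, without giving any proof. So you have supplied a proof where the paper only cites one.

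On the substance, your route is the standard one from that reference: show that every traditional bisimulation is a state bisimulation (via the closure of each $\xi\in\dist(\Sigma(R))$ under the lifted relation on measures), and that for a state bisimulation $R$ the sub-$\sigma$-algebra $\Sigma(R)$ is an event bisimulation with $R\subseteq\mathcal{R}(\Sigma(R))$. Both steps are handled correctly; in particular your ``measure-closure lemma'' is exactly the right hinge between the traditional clause (matching $\mu'$ with $\mu\mathrel{R}\mu'$) and the hit formulation. The only cosmetic remark is that in the second inclusion you could note explicitly that $\dist(\Sigma(R))\subseteq\dist(\Sigma)$ follows because the generators $\dist^{\geq q}(Q)$ for $Q\in\Sigma(R)$ already lie in $\dist(\Sigma)$, but you do use this and it is immediate.
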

%
%
The following Lemma summarizes one strategy to prove completeness of a
logic for traditional bisimilarity. We begin by recalling some
definitions; the reader may review them in Kechris \cite{Kechris} or
Srivastava \cite{books/daglib/0029964}. A topological space $(S,\mathcal{T})$ is \emph{Polish} if it is separable and completely
metrizable and an \emph{analytic}  space is the
image of a continuous map between Polish spaces. We call
$\B(\mathcal{T})\doteq\sig(\mathcal{T})$ (or $\B(S)$ if the topology
is understood from the context) the Borel \sig-algebra of the space
$(S,\mathcal{T})$. We say that
a measurable space is an \emph{analytic Borel}  if it is isomorphic to
some $(S,\B(\mathcal{T}))$ with $(S,\mathcal{T})$ analytic.
\begin{lemma}\label{lm:anylogic_to_bisim}
  Let $(S,\Sigma,T)$ be a NLMP with $(S,\Sigma)$ being an analytic
  Borel space. Let
  $\mathfrak{L}$ be a logic such that
  \begin{inparaenum}[\it (i)]
  \item%
    $\mathfrak{L}$ contains operators $\top$ and $\land$ with the
    usual semantics;
  \item%
    for every formula $\phi\in\mathfrak{L}$,
    $\sem{\phi}\in\Sigma$;
  \item%
    the set of all formulas in $\mathfrak{L}$ is countable; and
  \item%
    for every $s\rel{\mathfrak{L}}t$ and every $\mu\in \Tfont{T}_a(s)$
    there exists $\mu'\in \Tfont{T}_a(t)$ such that $\forall
    \phi\in\mathfrak{L}, \mu(\sem{\phi}) = \mu'(\sem{\phi})$.
  \end{inparaenum}
  Then, two logically equivalent states $s,t$ are traditionally
  bisimilar.
\end{lemma}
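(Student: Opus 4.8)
The plan is to build the required traditional bisimulation $R$ directly out of logical equivalence, and then verify its clauses using the hypotheses together with the structure theorem for analytic Borel spaces. Concretely, I would set $R = \mathcal{R}(\sem{\mathfrak{L}})$, the relation of having the same $\mathfrak{L}$-theory; by (iii) the family $\sem{\mathfrak{L}}=\{\sem{\phi}:\phi\in\mathfrak{L}\}$ is a countable subfamily of $\Sigma$, and by (i)--(ii) it contains $S$ and is closed under finite intersections, so it is a countable generator of a sub-$\sigma$-algebra $\Lambda\doteq\sigma(\sem{\mathfrak{L}})\subseteq\Sigma$. The relation $R$ is clearly symmetric and an equivalence relation, and $s\rel{\mathfrak{L}}t$ holds by assumption; it remains only to check that $R$ is a traditional bisimulation, i.e.\ that for $s\mathrel{R}t$ and $\mu\in\Tfont{T}_a(s)$ there is $\mu'\in\Tfont{T}_a(t)$ with $\mu\mathrel{R}\mu'$, where $\mu\mathrel{R}\mu'$ means the two measures agree on $\Sigma(R)$, the $\sigma$-algebra of $R$-closed Borel sets.

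The first key step is to identify $\Sigma(R)$ with $\Lambda$. Since $(S,\Sigma)$ is analytic Borel and $R=\mathcal{R}(\sem{\mathfrak{L}})$ is the equivalence relation induced by a \emph{countable} family of Borel sets, the unique structure theorem for analytic spaces applies: the $\sigma$-algebra $\Lambda$ generated by that countable family is exactly the $\sigma$-algebra of $R$-closed (equivalently, $R$-invariant) Borel sets, so $\Sigma(R)=\Lambda$. Hence $\mu\mathrel{R}\mu'$ is equivalent to: $\mu$ and $\mu'$ agree on $\Lambda$, which, because $\Lambda$ is generated by the $\pi$-system $\sem{\mathfrak{L}}$, is in turn equivalent to $\mu(\sem{\phi})=\mu'(\sem{\phi})$ for every $\phi\in\mathfrak{L}$. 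This is precisely the conclusion furnished by hypothesis (iv). So, given $s\mathrel{R}t$ and $\mu\in\Tfont{T}_a(s)$, hypothesis (iv) (applicable since $s\rel{\mathfrak{L}}t$) yields $\mu'\in\Tfont{T}_a(t)$ with $\mu(\sem{\phi})=\mu'(\sem{\phi})$ for all $\phi$, hence $\mu\mathrel{R}\mu'$, which is exactly the required matching condition. Thus $R$ is a traditional bisimulation containing $(s,t)$, so $s\tbisim t$.

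The step I expect to be the main obstacle is justifying $\Sigma(R)=\Lambda$: the inclusion $\Lambda\subseteq\Sigma(R)$ is routine (each $\sem{\phi}$ is $R$-closed by definition of $R$, and $R$-closed sets form a $\sigma$-algebra), but the reverse inclusion — that \emph{every} $R$-invariant Borel set already lies in the $\sigma$-algebra generated by the countable separating family — is exactly where analyticity of $(S,\Sigma)$ is essential and where one must invoke the unique structure / Blackwell--type theorem rather than prove it by hand. A secondary, more technical point is the measure-agreement argument: agreement on a generating $\pi$-system that contains the whole space extends to agreement on the generated $\sigma$-algebra by Dynkin's $\pi$--$\lambda$ theorem, and one should note that $\sem{\mathfrak{L}}$ is indeed closed under finite intersection precisely because $\mathfrak{L}$ has the operator $\land$ with its usual semantics (hypothesis (i)), so that $\sem{\phi_1\land\phi_2}=\sem{\phi_1}\cap\sem{\phi_2}$. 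With these two points in place the rest is bookkeeping.
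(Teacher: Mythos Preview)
Your proposal is correct and follows essentially the approach the paper indicates: the paper does not give a full proof but only remarks that it ``is based on the unique structure theorem for analytic Borel spaces,'' and your argument is precisely a fleshing-out of that hint---take $R=\mathcal{R}(\sem{\mathfrak L})$, use the Blackwell/unique-structure theorem to identify $\Sigma(R)$ with $\sigma(\sem{\mathfrak L})$, and then invoke the $\pi$--$\lambda$ theorem together with hypothesis~(iv) to get the matching $\mu'$. The only cosmetic point is that the version of the theorem you actually need is the Blackwell--Mackey form (an $R$-invariant Borel set lies in the countably generated sub-$\sigma$-algebra determining $R$), which you correctly flag; the ``separates points'' formulation quoted in the paper is applied after passing to the quotient.
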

The proof of this lemma is based on the \emph{unique structure
  theorem} for analytic Borel spaces $(S,\calS)$: every countably
generated sub-$\sig$-algebra of $\calS$ that separates points must
already be $\calS$.

By using this Lemma we were able to prove that a countable fragment of
$\calL$ is complete for traditional bisimilarity over image-finite
NLMP on analytic state-spaces. The next step would be to prove a
similar result for 
image-countable processes, and the safest way to test this is in a
more ``discrete'' setting. In the next section we consider a
restricted class of processes.
\subsection{Measurable LTS}
\label{sec:measurable-lts}
Many interesting (counter)examples can be constructed by considering
non-probabilistic NLMP, i.e., one $\nlmp S = (S,\calS, \{\Tfont{T}_a : a\in L \})$
such that for all $a\in L$ and $s\in S$, $\Tfont{T}_a(s)$ consists entirely
of point-masses (i.e., Dirac measures).  Assume we have such an
$\nlmp S$; we may write each set $\Tfont{T}_a(s)$ as $\{\delta_x : x\in
\Tfont{\tilde T}_a(s)\}$ where $\Tfont{\tilde T}_a(s) \subseteq S$ for each $s$ (moreover, it can be
seen that $\Tfont{T}_a(s)\in\calS$). Since there is a natural
correspondence between points $s\in S$ and Dirac measures $\delta_s
\in \dist(S)$, we may discard all references to $\dist(S)$ and work
with the simpler structure  $(S,\calS, \{\Tfont{\tilde T}_a : a\in L \})$ which is   essentially a
labelled transition system  (with some restrictions) over a measurable space.
This  presentation of non-probabilistic NLMP appears in
Wolovick \cite{Wolovick}. 
\begin{definition}\label{d:mlts}
A \emph{measurable labelled transition system} (MLTS) is a tuple
$\nlmp S = (S,\calS, \{\Tfont{\tilde T}_a : a\in L \})$ such that $(S,\calS)$ is a
measurable space and for each label $a\in L$, $\Tfont{\tilde T}_a:(S,\calS)\to
(\calS, H(\calS))$ is a measurable map.
\end{definition}
If we write $\pre{a}{Q}$
 for $\{s : \Tfont{\tilde T}_a(s) \cap Q \neq \emptyset\}$, 
then the measurability  requirement on $\Tfont{\tilde T}_a$ in
Definition~\ref{d:mlts} amounts asking 
$\calS$ to be \emph{stable} under the map $\pre{a}$: for all
$Q\in\calS$, $\pre{a}{Q}\in\calS$. 

Among the trivial examples of MLTS we might take any LTS $(S,
\{\Tfont{\tilde T}_a : a\in L \})$ (where $\Tfont{\tilde T}_a :S\to\Pow(S)$) and attach to it the powerset \sig-algebra;
then the measurability requirements are immediately satisfied. A more
interesting one is the following.
\begin{example}
There is a well-known  duality for $\mathrm{BAO}_\tau$, the variety of Boolean
algebras with operators of type  
$\tau$ \cite[p.\ 354ff]{Blackburn:2006:HML:1207696}. Consider the case
where the operators $\{\dia_a : a\in L\}$ in $\tau$ are all
unary. Then the dual category
consists of Kripke frames $(S,\{R_a : a \in L\})$, where $\dia_a =
\pre{a}$ for each $a$, endowed with a Stone topology $\mathcal{T}$ such that
$\dia_a$ maps clopen sets into clopen sets and the relations $R_a$ are
\emph{point-closed}, meaning that $R_a[s] \doteq \{y\in S : s\mathrel{R_a}
y \}$ is closed for each $s\in S$ (see
\cite[Sect.~4.6]{kracht1999tools}). For such a frame, we can define an associated LTS
$(S, \{\Tfont{\tilde T}_a : a\in L \})$, where $\Tfont{\tilde T}_a(s)
\doteq R_a[s]$, and if we augment it with the Borel \sig-algebra, the
previous restrictions ensure   that  $(S,\B(\mathcal{T}), \{\Tfont{\tilde T}_a
: a\in L \})$ is a MLTS.
\end{example}

We have 
the following  correspondence between non-probabilistic NLMP and
MLTS.
\begin{prop}[{\cite[Prop 4.7]{Wolovick}}]
  Assume that  $\calS$ is countably generated and separates points on
  $S$, 
  and for all $a\in L$ and $s\in S$, $\Tfont{T}_a(s)=\{\delta_x : x\in
  \Tfont{\tilde T}_a(s)\}$ for some sets $\Tfont{\tilde T}_a(s)\subseteq S$. Then
  $(S,\calS, \{\Tfont{T}_a : a\in L \})$   is a NLMP iff  $(S,\calS, \{\Tfont{\tilde T}_a : a\in L \})$ is a MLTS.
\end{prop}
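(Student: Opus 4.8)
The plan is to unfold both conditions and show that the measurability of $\Tfont{T}_a$ as a map into $(\dist(\Sigma),H(\dist(\Sigma)))$ is equivalent, under the stated hypotheses on $\calS$, to the measurability of $\Tfont{\tilde T}_a$ as a map into $(\calS,H(\calS))$. First I would make precise the correspondence $x\mapsto\delta_x$ between $S$ and the Dirac measures $\{\delta_x:x\in S\}\subseteq\dist(S)$, and the induced correspondence between a set $\Tfont{\tilde T}_a(s)\subseteq S$ and the set $\Tfont{T}_a(s)=\{\delta_x:x\in\Tfont{\tilde T}_a(s)\}\in\dist(S)$. The key observation is that for $\xi\in\calS$ one has $\delta_x\in\dist^{\geq q}(\xi)$ iff ($q\le 0$, or $x\in\xi$), so on Dirac measures the generating sets $\dist^{\geq q}(\xi)$ of $\dist(\calS)$ collapse (up to the trivial cases $q\le 0$ and $q>1$) to the images of the sets $\xi\in\calS$. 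Hence the trace of the $\sigma$-algebra $\dist(\calS)$ on the Dirac measures is exactly the image of $\calS$ under $x\mapsto\delta_x$. This is where countable generation of $\calS$ is used: it lets us handle $\dist(\calS)=\sig(\{\dist^{\geq q}(Q):q\in\Q,Q\in\calS\})$ by reducing to a countable generating family and checking the Dirac trace generator-by-generator, rather than worrying about the whole $\sigma$-algebra.

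Next I would translate the hitting $\sigma$-algebras. For $\xi\in\calS$, the generator $H_\xi$ of $H(\calS)$ is $\{\zeta\in\calS:\zeta\cap\xi\neq\emptyset\}$, and under the correspondence $\Tfont{\tilde T}_a(s)\leftrightarrow\Tfont{T}_a(s)$ we have $\Tfont{\tilde T}_a(s)\cap\xi\neq\emptyset$ iff $\Tfont{T}_a(s)$ contains some $\delta_x$ with $x\in\xi$, i.e.\ iff $\Tfont{T}_a(s)\cap\dist^{\geq 1}(\xi)\neq\emptyset$. Thus $\Tfont{\tilde T}_a^{-1}(H_\xi)=\Tfont{T}_a^{-1}(H_{\dist^{\geq 1}(\xi)})$. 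Since, by the previous paragraph, $\{\dist^{\geq q}(\xi):q\in\Q,\ \xi\in\calS_0\}$ generates $\dist(\calS)$ for a countable generating family $\calS_0$ of $\calS$, the sets $H_{\dist^{\geq q}(\xi)}$ generate $H(\dist(\calS))$; and the sets $H_\xi$ with $\xi\in\calS_0$ generate $H(\calS)$, using stability of $\calS$ under $\pre{a}$. So measurability of $\Tfont{\tilde T}_a$ amounts to $\pre{a}{\xi}=\Tfont{\tilde T}_a^{-1}(H_\xi)\in\calS$ for all $\xi$, while measurability of $\Tfont{T}_a$ amounts to $\Tfont{T}_a^{-1}(H_{\dist^{\geq q}(\xi)})\in\calS$ for all $q,\xi$; the equivalence then follows by checking that each family of preimages reduces to the other, the only subtlety being the $q$-dependence, which I would dispatch by noting $\Tfont{T}_a^{-1}(H_{\dist^{\geq q}(\xi)})$ equals $\pre{a}{\xi}$ when $0<q\le 1$, equals $S$ when $q\le 0$, and equals $\emptyset$ when $q>1$.

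The step I expect to be the main obstacle is showing that the Dirac trace of $\dist(\calS)$ is precisely $\{\{\delta_x:x\in\xi\}:\xi\in\calS\}$, i.e.\ that no ``new'' sets appear when one generates the $\sigma$-algebra. The containment $\supseteq$ is immediate from $\dist^{\geq 1}(\xi)$; for $\subseteq$ one argues that the family of sets $A\subseteq\dist(S)$ whose Dirac trace is of the form $\{\delta_x:x\in\xi\}$ with $\xi\in\calS$ is itself a $\sigma$-algebra containing all the generators $\dist^{\geq q}(Q)$ — here it is essential that $\calS$ is a genuine $\sigma$-algebra (so the trace family is closed under complement and countable union) and that $Q$ ranges over $\calS$, and countable generation is what keeps the bookkeeping finite. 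Once this ``trace lemma'' is in hand, the rest is the routine generator-chasing sketched above, and the hypothesis that $\calS$ separates points is used only to make the identification $x\mapsto\delta_x$ injective so that the correspondence $\Tfont{\tilde T}_a\leftrightarrow\Tfont{T}_a$ is well-defined and the two formulations genuinely describe the same data.
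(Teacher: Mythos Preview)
The paper does not give its own proof of this proposition: it is quoted verbatim from Wolovick's thesis (the bracketed citation \cite[Prop 4.7]{Wolovick}) and stated without argument, so there is nothing in the present text to compare your attempt against.

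Your plan is nonetheless essentially correct and is the natural route. The identification $\tilde{\Tfont T}_a^{-1}(H_\xi)=\Tfont T_a^{-1}(H_{\dist^{\geq 1}(\xi)})$ and the measurability of the Dirac embedding $\delta:x\mapsto\delta_x$ are the two facts that carry the argument, and you have both. One soft spot worth tightening: your assertion that the sets $H_{\dist^{\geq q}(\xi)}$ (for $\xi$ in a generating family) generate $H(\dist(\calS))$ is not justified, and the map $\zeta\mapsto H_\zeta$ respects unions but not complements, so generation of $\Lambda$ by $\mathcal G$ does not formally entail generation of $H(\Lambda)$ by $\{H_\gamma:\gamma\in\mathcal G\}$. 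You can bypass this entirely: for arbitrary $\zeta\in\dist(\calS)$ one has $\Tfont T_a^{-1}(H_\zeta)=\tilde{\Tfont T}_a^{-1}(H_{\delta^{-1}(\zeta)})$ because $\Tfont T_a(s)$ contains only Diracs, and $\delta^{-1}(\zeta)\in\calS$ since $\delta$ is measurable; this gives MLTS $\Rightarrow$ NLMP directly. For the codomain checks (that $\tilde{\Tfont T}_a(s)\in\calS$ iff $\Tfont T_a(s)\in\dist(\calS)$), countable generation and point-separation together let you write the set $D$ of Dirac measures as $\bigcap_n(\dist^{\geq 1}(A_n)\cup\dist^{\geq 1}(A_n^c))\in\dist(\calS)$ for a countable generating family $\{A_n\}$, after which $\Tfont T_a(s)=D\cap\dist^{\geq 1}(\tilde{\Tfont T}_a(s))$; you gesture at this but it deserves to be made explicit.
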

The hypothesis on the measurable structure is satisfied, for example,
in any analytic Borel space $(S,\calS)$, and in the following sections we
will work only with such spaces.

The following Lemma from
\cite{D'Argenio:2012:BNL:2139682.2139685} provides translations for
our three notions of bisimulation. Event bisimulation is again a
restatement of the measurability criterion (now that of Definition~\ref{d:mlts})
for sub-\sig-algebras,
and the reader might notice that for MLTS the notion of traditional
bisimulation has a look closer to the standard definitions, e.g. as in
Kripke frames.
\begin{lemma}
  Let $\nlmp S$ be a NLMP such that for all $a\in L$ and $s\in S$, $\Tfont{T}_a(s)=\{\delta_x : x\in
  \Tfont{\tilde T}_a(s)\}$.
  \begin{enumerate}
  \item \label{c:non-prob-event}
    A $\sigma$-algebra $\Lambda\subseteq\Sigma$ is an event bisimulation
    on $\nlmp S$ if and only if it is stable under the mapping $\pre{a}$.
  \item \label{c:non-prob-state}
    A symmetric relation $R$ is a state bisimulation on $\nlmp S$ if and
    only if for all $s,t\in S$ such that $s\,R\,t$, it holds that for
    all $Q\in\calS(R)$, $s \in \pre{a}{Q} \sii t \in \pre{a}{Q}$.
  \item \label{l:non-prob-tradit}
    A symmetric relation $R$ is a traditional bisimulation on $\nlmp S$ if and only if for all
    $s,t\in S$ and $u\in \Tfont{\tilde T}_a(s)$, if $s\,R\,t$  then there exists
  $v\in \Tfont{\tilde T}_a(t)$ such that $u\rel{\calS(R)}v$.
  \end{enumerate}
\end{lemma}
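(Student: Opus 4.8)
The plan is to unfold the NLMP notions of Definitions~\ref{def:hit-salg}--\ref{def:nlmp_ls_sb} and transport every condition through the \emph{Dirac embedding} $\iota\colon S\to\dist(S)$, $\iota(x)=\delta_x$, exploiting that a Dirac measure takes only the values $0$ and $1$. The one fact that drives all three parts is that, for any sub-$\sigma$-algebra $\Lambda\subseteq\calS$, the map $\iota$ is $(\Lambda,\dist(\Lambda))$-measurable. This follows from the standard generator criterion: $\dist(\Lambda)$ is generated by the sets $\dist^{\geq q}(Q)$ with $Q\in\Lambda$ and $q\in\Q\cap[0,1]$, and $\iota^{-1}(\dist^{\geq q}(Q))$ is $Q$ when $0<q\leq 1$ and $S$ when $q=0$; since preimages commute with the $\sigma$-algebra operations, $\iota^{-1}(\xi)\in\Lambda$ for every $\xi\in\dist(\Lambda)$. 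Combining this with $\Tfont{T}_a(s)=\iota[\Tfont{\tilde T}_a(s)]$, for every state $s$ and every set of measures $\xi$ we get the master identity
\[
 \Tfont{T}_a^{-1}(H_\xi)=\{s\in S:\Tfont{T}_a(s)\cap\xi\neq\emptyset\}=\pre{a}{\iota^{-1}(\xi)},
\]
which is the common engine for all three equivalences; the hypotheses that $\calS$ be countably generated and separate points play no part here, only in the companion Proposition.

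First I would treat~(\ref{c:non-prob-event}). By Definition~\ref{def:nlmp_eb}, $\Lambda$ is an event bisimulation iff $\Tfont{T}_a^{-1}(H_\xi)\in\Lambda$ for every label $a$ and every $\xi\in\dist(\Lambda)$ (these $H_\xi$ generate $H(\dist(\Lambda))$). By the master identity this reads $\pre{a}{\iota^{-1}(\xi)}\in\Lambda$, and since $\iota^{-1}(\xi)$ lies in $\Lambda$, stability of $\Lambda$ under $Q\mapsto\pre{a}{Q}$ is plainly sufficient. For the converse, if $\Lambda$ is an event bisimulation then, choosing $\xi=\dist^{\geq 1}(Q)$ with $Q\in\Lambda$ --- so $\xi\in\dist(\Lambda)$ and $\iota^{-1}(\xi)=Q$ --- the master identity gives $\pre{a}{Q}=\Tfont{T}_a^{-1}(H_{\dist^{\geq 1}(Q)})\in\Lambda$, i.e.\ $\Lambda$ is stable under $Q\mapsto\pre{a}{Q}$. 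This is precisely the measurability condition of Definition~\ref{d:mlts} relativized to $\Lambda$.

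Next~(\ref{c:non-prob-state}). Fix a symmetric $R$ and states with $s\mathrel{R}t$, and apply the opening paragraph with $\Lambda=\calS(R)$, which is a $\sigma$-algebra by construction. The quantification ``$\forall\xi\in\dist(\calS(R))$'' of Definition~\ref{def:nlmp_sb} becomes, by the master identity, ``$\forall\xi\in\dist(\calS(R))\colon s\in\pre{a}{\iota^{-1}(\xi)}\sii t\in\pre{a}{\iota^{-1}(\xi)}$''. As $\xi$ runs over $\dist(\calS(R))$ the set $\iota^{-1}(\xi)$ runs over a subfamily of $\calS(R)$ that, since $\iota^{-1}(\dist^{\geq 1}(Q))=Q$, contains every $Q\in\calS(R)$; hence it runs over exactly $\calS(R)$, and the condition is equivalent to ``$\forall Q\in\calS(R)\colon s\in\pre{a}{Q}\sii t\in\pre{a}{Q}$''. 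Both directions of~(\ref{c:non-prob-state}) come out at once.

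Finally~(\ref{l:non-prob-tradit}), which needs no measurability input. Since $\Tfont{T}_a(s)=\{\delta_u:u\in\Tfont{\tilde T}_a(s)\}$ and likewise for $t$, the clause ``$\forall\mu\in\Tfont{T}_a(s)\,\exists\mu'\in\Tfont{T}_a(t)\colon\mu\mathrel{R}\mu'$'' of Definition~\ref{def:nlmp_ls_sb} unwinds to ``$\forall u\in\Tfont{\tilde T}_a(s)\,\exists v\in\Tfont{\tilde T}_a(t)\colon\delta_u\mathrel{R}\delta_v$''; and $\delta_u\mathrel{R}\delta_v$, meaning $\delta_u(Q)=\delta_v(Q)$ for all $Q\in\calS(R)$, is --- the values lying in $\{0,1\}$ --- exactly $u\in Q\sii v\in Q$ for all $Q\in\calS(R)$, that is, $u\rel{\calS(R)}v$. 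The only point calling for a little care will be the two-level matching in~(\ref{c:non-prob-state}): one must notice that the outer quantifier over measurable sets of measures $\xi\in\dist(\calS(R))$ collapses onto the quantifier over $R$-closed sets $Q\in\calS(R)$, the surjectivity of $\iota^{-1}$ onto $\calS(R)$ being what licenses this; the degenerate cases $q=0$ and $\Tfont{\tilde T}_a(s)=\emptyset$ cause no trouble, being absorbed by using the threshold $q=1$ and by $\pre{a}{S}=\{s:\Tfont{\tilde T}_a(s)\neq\emptyset\}$, respectively. Beyond that I expect no real obstacle: once the Dirac embedding is recognized as the right vehicle, each equivalence is bookkeeping.
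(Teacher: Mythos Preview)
Your argument is correct. The Dirac embedding $\iota$ is the natural device here, the master identity $\Tfont{T}_a^{-1}(H_\xi)=\pre{a}{\iota^{-1}(\xi)}$ is valid, and your treatment of each of the three parts is sound; in particular, the observation that $\{\iota^{-1}(\xi):\xi\in\dist(\calS(R))\}$ is both contained in $\calS(R)$ (by $(\calS(R),\dist(\calS(R)))$-measurability of $\iota$) and contains every $Q\in\calS(R)$ (via $\xi=\dist^{\geq 1}(Q)$) is exactly what is needed to collapse the quantifier in part~(\ref{c:non-prob-state}).

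As for comparison with the paper: the paper does not prove this lemma at all. It is stated with attribution to \cite{D'Argenio:2012:BNL:2139682.2139685} and used as a black box. So there is no ``paper's own proof'' to compare against; your write-up supplies what the paper omits. If one consults the cited reference, the argument there is in the same spirit as yours---everything passes through the correspondence $x\leftrightarrow\delta_x$ and the identity $\iota^{-1}(\dist^{\geq 1}(Q))=Q$---so your approach is the expected one, just presented more compactly via the single ``master identity''.
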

Since for every relation one has $R\subseteq{\rel{\calS(R)}}$, 
standard (Milner's) bisimilarity $\sim_\mathrm{M}$ for LTS is also a traditional
bisimulation of MLTS, and then ${\sim_\mathrm{M}} \subseteq
{\tbisim}$. Traditional bisimilarity for MLTS is coarser than standard
bisimilarity because bisimilarity classes are not directly
``accessible'', but only through the measurable subsets. Also, examples in
\cite{DWTC09:qest,D'Argenio:2012:BNL:2139682.2139685} show that there
are (non-probabilistic)  image-uncountable NLMP where the notions
above are different. 

Observe that for a non-probabilistic NLMP as
above, the quantitative assessment $q$ in formulas  of the
type $\qmodalgq{\phi}{q}$ doesn't play any role, and hence $\calL$ is
  equivalent to  Hennessy-Milner
logic with countable conjunctions (and disjunctions) on this family of
processes%
\footnote{For instance, the Hennessy-Milner formula $\modal{a}\phi$ is
  equivalent to the $\calL$-formula $\modal{a}\qmodalgq{\phi}{1}$.}%
. Therefore, $\calL$ characterizes standard bisimilarity for image-countable
MLTS and then ${\rel{\sem\calL}} = {\sim_\mathrm{M}}$. By appealing to Theorem~\ref{theorem:logic_is_sound} we can state
\begin{prop}\label{pr:all-bisim-the-same}
For image-countable MLTS, all kinds of bisimilarities (traditional,
state, event, and standard) coincide.
\end{prop}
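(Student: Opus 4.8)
The plan is to chain together the several equivalences already made available in the excerpt, so that all four notions of bisimilarity are pinched between the coarsest and the finest, which turn out to coincide under the image-countability hypothesis. The key inclusions to assemble are: Milner's standard bisimilarity $\sim_\mathrm{M}$ sits below traditional bisimilarity (noted just before the statement, via $R \subseteq \rel{\calS(R)}$); traditional, state and event bisimilarity are ordered ${\tbisim} \subseteq {\sbisim} \subseteq {\ebisim}$ by Theorem~\ref{theorem:logic_is_sound}; event bisimilarity equals logical equivalence for $\calL$, i.e.\ ${\ebisim} = {\rel{\sem\calL}}$, again by Theorem~\ref{theorem:logic_is_sound} (or Theorem~\ref{thm:charaterization_event_bisimulation}); and, finally, for image-countable MLTS one has ${\rel{\sem\calL}} = {\sim_\mathrm{M}}$, which is exactly the remark made in the paragraph preceding the proposition, using that on non-probabilistic processes $\calL$ collapses to Hennessy--Milner logic with countable conjunctions, and that this logic characterizes standard bisimilarity for image-countable transition systems.

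Concretely, I would write: by the observations above, ${\sim_\mathrm{M}} \subseteq {\tbisim} \subseteq {\sbisim} \subseteq {\ebisim} = {\rel{\sem\calL}} = {\sim_\mathrm{M}}$, where the first inclusion is the $R \subseteq \rel{\calS(R)}$ remark, the middle two are Theorem~\ref{theorem:logic_is_sound}, the first equality is the event-bisimulation characterization, and the last equality is the image-countable Hennessy--Milner characterization. Since the chain begins and ends with $\sim_\mathrm{M}$, every containment in it is forced to be an equality, so all four relations coincide. That is the whole argument at the level of relations; nothing further is needed once the ingredients are cited in the right order.

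The one genuine point that deserves a sentence of justification — and the only place where a reader might want more than a citation — is the claim ${\rel{\sem\calL}} = {\sim_\mathrm{M}}$ for image-countable MLTS, i.e.\ that countable-conjunction Hennessy--Milner logic characterizes standard bisimilarity when every $\Tfont{\tilde T}_a(s)$ is countable. The nontrivial direction is that logical equivalence implies bisimilarity: one checks that the relation $\rel{\sem\calL}$ is itself a (standard) bisimulation. Given $s \rel{\sem\calL} t$ and $u \in \Tfont{\tilde T}_a(s)$, if no $v \in \Tfont{\tilde T}_a(t)$ were logically equivalent to $u$, then for each of the countably many $v \in \Tfont{\tilde T}_a(t)$ one picks a formula $\phi_v$ with $u \in \sem{\phi_v}$ and $v \notin \sem{\phi_v}$ (or vice versa), and the countable conjunction $\phi \doteq \bigwedge_v \phi_v$ witnesses $s \in \sem{\modal{a}\phi}$ but $t \notin \sem{\modal{a}\phi}$, contradicting $s \rel{\sem\calL} t$; countability of $\Tfont{\tilde T}_a(t)$ is exactly what makes this conjunction a legal formula of $\calL$. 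This is the step I expect to be the main (and really the only) obstacle, and it is standard; the rest is pure bookkeeping of the already-established inclusions and equalities.
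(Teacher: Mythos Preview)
Your proof is correct and follows exactly the paper's argument: the paper assembles the very same chain ${\sim_\mathrm{M}} \subseteq {\tbisim} \subseteq {\sbisim} \subseteq {\ebisim} = {\rel{\sem\calL}} = {\sim_\mathrm{M}}$ from the remarks preceding the proposition and Theorem~\ref{theorem:logic_is_sound}. If anything, you are more explicit than the paper, which simply asserts the Hennessy--Milner step ${\rel{\sem\calL}} = {\sim_\mathrm{M}}$ without spelling out the countable-conjunction argument you give.
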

We shall henceforth drop the subindexes and use simply $\sim$.

Let's return to  the problem left open at the end of the previous
section. To apply Lemma~\ref{lm:anylogic_to_bisim}, the candidate logic should
satisfy several requirements. So the first question is if   there
actually exists any countable logic that characterizes bisimilarity
for  countable LTS. The answer is given by the following
\begin{example}[X. Caicedo]\label{ex:caicedo}
  Fix a countable set of labels $L$. There are at most $2^{\aleph_0}$
  (bisimilarity classes of) 
  countable LTS over $\N$. Hence there is an injective function $f$ from
  bisimilarity classes to $\Pow(\N)$. Our `logic' will consist of
  countably many  atomic formulas $P_n$ ($n\in\N$) with the
  following semantics:
  \[\nlmp S, s \models P_n \iff n\in f([(\nlmp S, s)]_{\sim}),\]
  where $[\cdot]_{\sim}$ denotes $\sim$-classes of equivalence. 
  The logic $\calL_X := \{P_n : n\in\N\}$ is sound and complete for
  bisimilarity.
\end{example}
The logic $\calL_X$ is devised in a non-constructive manner, and the
main result in this work is to show that actually the extensions of
formulas of such a countable logic cannot be Borel sets, confirming
the intuition that formulas in $\calL_X$ cannot be conceived as any
reasonable kind of ``test'' on a process, no matter how the function
$f$ is chosen.
\section{The main result}
We will use some concepts from sequence (zero-dimensional) spaces. The
reader may review these and related concepts in
Kechris~\cite{Kechris} or Moschovakis~\cite{moschovakisDesc}. Let
$E$ be a set. The
set of all finite sequences of elements of $E$ will be denoted by
$E^*$. The empty sequence will be denoted by $\vacia$. The $i$th
element of a sequence $s\in E^*$ will be denoted by
$s^i$; hence $s=\<s^0,\dots, s^{|s|-1}\>$, where $|s|$ is the
length of $s$. The concatenation of two sequences $s,t\in E^*$
will be denoted by $s\ap t$; in case $t=\<e\>$ with $e\in E$, we will
write  $s\ap e$ instead of $s\ap \<e\>$. A \emph{tree} on $E$ is a
subset of $E^*$ closed by taking prefixes. We will be  interested
in the case where $E$ is countable, and specially $E=\N$.

Let $A$ be countable, and consider the discrete topology on it. The
product space $A^\N$ of all infinite sequences of 
elements of $A$ is Polish and has a (clopen) basis given by the sets $C_f =
\{x\in A^\N: f\subset x\}$, where $f$ is a finite function. When $A=2$,
we obtain the Cantor space $\Pow(\N)$. In general, for every countable
set $B$, we regard $\Pow(B)$ as a separable compact Hausdorff space where its
topology has the following subbasic sets:
\begin{equation}\label{eq:4}
\{X\subseteq B : s\in X \},\ \{X\subseteq B : t\notin X \}\qquad
\text{with } s,t\in B.
\end{equation}
Hence the basic
open sets are $\{X\subseteq B : P\subseteq X \et N\cap X =\emptyset\}$, where
$P,N\subseteq B$ are finite; it is also immediate that the Borel
\sig-algebra of $\Pow(B)$ is generated by sets of the first kind in~(\ref{eq:4}). In case $B=\N$ we obtain exactly the
basis given by $\{C_f\}$. We will use the  homeomorphic spaces $2^B$
and $\Pow(B)$ (and their respective presentations) interchangeably.

A binary structure $(\N,R)$ can be represented by a point in
$\mathrm{Rel} \doteq 2^{\N\times\N}$. The set $LO$ of strict linear
order relations,
\[LO \doteq \{ R \in  2^{\N\times\N} : R \text{ is irreflexive,
  transitive, and total: } \forall x\neq y,\ x\mathrel{R}y \text{ or }
y\mathrel{R}x\},\] 
is a closed subset of $\mathrm{Rel}$ \cite[Sect.\ 16.C and 27.C]{Kechris}; this
can be easily seen since universal conditions (as, for
instance, ``$\forall x : x\mathrel{\centernot{R}}x$'') define closed sets. Hence
$LO$ is a Polish space. The same happens to the  set $\Tree$ of all
trees on $\N$: it is a closed subset of  $2^{\Suc}$.

Let $X$ be a 
Polish space; recall we are using  $\B(X)$ for its Borel sets.
$\Sig_1^1(X)$ denotes the  family 
\[\{A\subseteq X : \exists Y \text{ Polish and }f:Y\to X
\text{ continuous with }f[Y]=A\}\]
of analytic subsets of $X$ (see \cite{Kechris}). Both Borel and
analytic sets are preserved by taking continuous preimages and countable unions
and intersections. We will usually omit the reference to the space $X$.
\subsection{Bisimilarity on denumerable trees}
We first recall a standard construction of trees from linear orders. 
Given a strict linear order $\E = (E,R)$ over a countable set $E$ 
we may define a new countable structure $(T_\E,{\pred})$ 
as follows:
\begin{itemize}
\item $T_\E\doteq \{s\in E^* :
  s^{|s|-1}\mathrel{R}s^{|s|-2}\mathrel{R}\dots\mathrel{R}s^0\}\cup\{\<e\>
  : e\in E\}\cup\{\vacia\}.$
\item $s\mathrel{\pred}s' \iff  \exists  e\in E:s\ap e = s'$. 
\end{itemize}
The tree $T_\E$ consists of all finite decreasing
sequences in $(E,R)$. We also use $T_R$ to denote this tree,
whenever $E$ is clear from the context. We can obviously regard the
binary structures $(T_\E,\pred)$ as a LTS with a singleton label set
$L\doteq \{l\}$, and we say that two trees are bisimilar if there is
a bisimulation relating both of the roots. Our next landmark will be to prove that the relation
of bisimilarity among the processes  $T_\E$ is a non Borel relation on
$\Tree$. 

We'll now sketch the argument of the proof. Let $WO$ ($\NWO$) be the
set of (non) wellorder relations on $\N$, regarded as subsets
of $LO$. It is well known that the set $\NWO$ is a 
$\Sig^1_1$-\emph{complete} set
, in the sense that it is as `complicated' as any
analytic subset of a (zero-dimensional) Polish space; in particular,
it is not Borel. We will be able to distinguish elements $R\in \NWO$
among linear orderings essentially just by looking at the bisimilarity
type of the tree $T_R$ over $\N$, regarded as a processes with initial state
$\vacia$. Thus we'll have succeeded \emph{reducing} $\NWO$ to
the relation of bisimilarity, thus showing that the latter is not
Borel.
\medskip

In first place we elucidate the notion of reduction that concerns us.
\begin{definition}
Let $X,Y$ be Polish spaces and $A\subseteq X$ and
$B\subseteq Y$. A \emph{continuous reduction} of $A$ to $B$ is a
continuous map $f:X\to Y$ such that $f^{-1}[B]=A$; in this case we say
that $A$ is \emph{Wadge reducible} to $B$.
$B$ is \emph{$\Sig_1^1$-hard}
if for every zero-dimensional Polish space $X$ and every
$A\in\Sig_1^1(X)$, $A$ is Wadge reducible to $B$, and $B$ is
\emph{$\Sig_1^1$-complete} if moreover $B\in\Sig_1^1(X)$.
\end{definition}
\begin{theorem}[Lusin, Sierpi\'nski {\cite[27.12]{Kechris}}]
$\NWO$ is a $\Sig_1^1$-complete and, in particular, non Borel subset
  of $LO$.
\end{theorem}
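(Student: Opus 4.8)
The plan is to establish the two ingredients of $\Sig_1^1$-completeness separately: that $\NWO$ is analytic (routine), and that $\NWO$ is $\Sig_1^1$-hard (the substance of the argument), the latter via the classical Kleene--Brouwer coding of trees by linear orders. Non-Borelness is then automatic: a continuous preimage of a Borel set is Borel, so if $\NWO$ were Borel then every analytic set would be Borel, contradicting the existence of analytic non-Borel sets.

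For membership, I would use that a strict linear order is a wellorder exactly when it admits no infinite strictly descending chain, so that
\[
R\in\NWO \quad\iff\quad R\in LO \ \text{ and }\ \exists f\in\N^\N\ \forall n\in\N\colon\ \bigl(f(n+1),\,f(n)\bigr)\in R .
\]
The relation inside the existential quantifier is a closed subset of $2^{\N\times\N}\times\N^\N$ (the intersection of the closed set $LO\times\N^\N$ with the countably many clopen conditions ``$(f(n+1),f(n))\in R$''), and $\NWO$ is its projection along the $\N^\N$-coordinate; hence $\NWO\in\Sig_1^1(LO)$. (Equivalently, one may quote that $WO$ is coanalytic and complement.)

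For hardness, fix a zero-dimensional Polish space $X$ and $A\in\Sig_1^1(X)$; I want a continuous $f\colon X\to LO$ with $f^{-1}[\NWO]=A$. Since $X$ embeds homeomorphically into $\N^\N$ and the image of $A$ is then analytic in $\N^\N$, it suffices to build the reduction for $X=\N^\N$ and precompose with the embedding. So take a tree $S$ on $\N\times\N$ with $A=p[S]$, and for $x\in\N^\N$ let $S_x\doteq\{t\in\Suc : (x{\restriction}|t|,\,t)\in S\}$ be the $x$-section tree, so that $x\in A$ iff $S_x$ is ill-founded. The assignment $x\mapsto S_x$ is continuous from $\N^\N$ to $\Tree$, because membership of a fixed $t$ in $S_x$ depends only on the finite word $x{\restriction}|t|$, i.e.\ is a clopen condition on $x$. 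Then I compose with the Kleene--Brouwer construction: for a tree $T$ on $\N$, its Kleene--Brouwer order is a linear order on $T$ which is a wellorder iff $T$ is wellfounded; after fixing a bijection $\Suc\cong\N$ and placing the vertices outside $T$ as a single increasing copy of $\om$ above all of $T$, this yields an element $g(T)\in LO$, and $g\colon\Tree\to LO$ is continuous since, for fixed $m,n\in\N$, whether $(m,n)\in g(T)$ is a Boolean combination of the clopen conditions ``$t\in T$'' for the two relevant words. Setting $f(x)\doteq g(S_x)$ then gives a continuous map with
\[
x\in A \ \iff\ S_x\ \text{ill-founded} \ \iff\ g(S_x)\notin WO \ \iff\ f(x)\in\NWO ,
\]
as required.

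The only place that needs genuine care --- and hence the main obstacle --- is engineering the tree-to-order map $g$ so that it simultaneously (i) always lands in $LO$, i.e.\ produces a \emph{total}, \emph{irreflexive}, \emph{transitive} relation on \emph{all} of $\N$ rather than merely on $T$; (ii) stays continuous on $\Tree$; and (iii) preserves the wellfounded/ill-founded dichotomy, including the degenerate cases $T=\emptyset$ and $T$ finite, which are wellfounded and so must be sent into $WO$. Appending the complement of $T$ as a single increasing copy of $\om$ on top of the Kleene--Brouwer order handles (i) and (iii): a concatenation of two wellorders is a wellorder, while a branch of an ill-founded $T$ still furnishes an infinite descending chain; and continuity (ii) holds because whether a given pair lies in $g(T)$ is decided by finitely much information about $T$. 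Everything else --- closedness of $LO$ and $\Tree$, the quantifier count in the easy direction, and the reduction of a general zero-dimensional $X$ to $\N^\N$ --- is routine bookkeeping; the one classical fact I would invoke without reproving is that the Kleene--Brouwer order of $T$ is a wellorder precisely when $T$ is wellfounded.
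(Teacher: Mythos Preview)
Your argument is correct and is essentially the classical proof as presented in the cited reference (Kechris, 27.12): analyticity via the obvious existential quantifier over descending chains, and hardness via section trees of a universal tree composed with the Kleene--Brouwer ordering, padded to a total order on $\N$. Note, however, that the paper itself offers no proof of this theorem at all---it is quoted as a known result of Lusin and Sierpi\'nski with a citation, and is used as a black box in the subsequent reductions. So there is nothing to compare against: you have supplied the standard argument where the paper deliberately defers to the literature.
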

There is an extensive literature on  $\Sig_1^1$-complete  sets; one of
the prominent examples is the relation of isomorphism on denumerable
structures, and in particular for binary structures.
\begin{theorem}[{\cite[27.D]{Kechris},\cite{Kechris-errata}}\label{th:iso_sigma11_complete}]
    The relation of isomorphism between denumerable binary structures, coded as a
    subset of $2^{\N\times\N}\times2^{\N\times\N}$ is
    $\Sig_1^1$-complete.%
    \footnote{Another examples of (Borel) $\Sig_1^1$-complete
      \emph{relations} are those of isomorphism and embedding between
      separable Banach spaces, and the relation ``$F$ and $F'$ have
      nonempty intersection'' in the space of closed subsets of $\N^\N$.}%
\end{theorem}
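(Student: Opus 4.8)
The plan is to verify two things about the isomorphism relation $\iso$, viewed as a subset of the (zero-dimensional Polish) space $2^{\N\times\N}\times2^{\N\times\N}$: that it is analytic, and that it is $\Sig_1^1$-hard.

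Analyticity is direct. For $M,N\in 2^{\N\times\N}$ we have $M\iso N$ if and only if there is $g\in\N^\N$ such that $g$ is a bijection of $\N$ with $M(i,j)=N(g(i),g(j))$ for all $i,j$. Since $\N^\N$ is Polish and the condition on the triple $(M,N,g)$ is Borel --- indeed ``$g$ is a bijection'' is $\bPi^0_2$ and the preservation clause is closed --- projecting away the $g$-coordinate exhibits $\iso$ as the continuous image of a Borel set, hence $\iso\in\Sig_1^1$.

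For $\Sig_1^1$-hardness it is enough, by the Lusin--Sierpi\'nski theorem and transitivity of Wadge reducibility, to construct a continuous reduction of $\NWO$ to $\iso$. I would base it on the tree construction $R\mapsto T_R$ recalled above (with $\E=(\N,R)$): the map $R\mapsto T_R\in\Tree$ is continuous, and a branch of $T_R$ is precisely an infinite $R$-descending sequence, so the partial order $(T_R,\pred)$ is well-founded exactly when $R\in WO$. The reduction then has the shape $f(R)=(A_R,B_R)$, where $A_R$ and $B_R$ are countable binary structures depending continuously on $R$ and arranged so that $A_R\iso B_R\iff R\in\NWO$: take $A_R:=(T_R,\pred)$, and let $B_R$ be the corresponding tree of a modification of $R$ that is ill-founded for every input --- for instance appending a descending $\om$-chain, or a more careful normalisation of $T_R$. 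The easy direction is then immediate: if $R\in WO$ then $A_R$ is well-founded while $B_R$ is not, and well-foundedness of a partial order is isomorphism-invariant, so $A_R\not\iso B_R$.

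The hard part --- the step I expect to be the main obstacle --- is the converse, that $A_R\iso B_R$ for every ill-founded $R$. The relevant structural fact is that if $R$ is ill-founded then every node of $T_R$ lying on an infinite branch is $\aleph_0$-branching (because no linear order all of whose proper initial segments are well-orders can itself be ill-founded), so the pruned part of $T_R$ is isomorphic to the full tree $\N^{*}$; consequently the extra ``ill-founded material'' put into $B_R$ is absorbed up to isomorphism, being already present, up to isomorphism, inside $A_R$. Turning this into an actual back-and-forth isomorphism uniformly in ill-founded $R$ --- and checking that the chosen assignment $R\mapsto(A_R,B_R)$ is genuinely continuous --- is the delicate book-keeping; the details (with a correction to an earlier printing) are carried out in Kechris \cite{Kechris,Kechris-errata}. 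Granting $f$, the equality $f^{-1}[\iso]=\NWO$ is the desired Wadge reduction, and together with analyticity this shows $\iso$ is $\Sig_1^1$-complete, hence not Borel.
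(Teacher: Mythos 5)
Note first that the paper offers no proof of this theorem: it is quoted verbatim from Kechris 27.D and the errata, and only the \emph{idea} behind one proof (Dougherty's reduction $R\mapsto(T_R,T_{R+R})$) is later recycled, at the level of bisimilarity rather than isomorphism, in Corollaries~\ref{c:bisim-separ-WO} and~\ref{c:NWO-R+R}. Your sketch follows exactly that line: analyticity by projecting out the witnessing bijection is fine, the reduction of $\NWO$ via the tree construction is the right one, and the easy direction ($R\in WO$ forces $A_R$ well-founded and $B_R$ not, hence non-isomorphic) is correct.

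The genuine gap is where you expect it, but the justification you offer for it does not close it. From ``the pruned part of $T_R$ is isomorphic to $\N^*$'' you cannot conclude that the ill-founded material in $B_R$ is ``absorbed'': two trees with isomorphic pruned parts need not be isomorphic (a single infinite branch versus an infinite branch with finite dead ends attached already shows this), and for \emph{isomorphism}, unlike for bisimilarity, the well-founded debris hanging off the infinite branches is exactly what one must control. The repair uses the linear-order structure rather than the pruned tree: write the ill-founded $R$ as $\mathbf{A}+\mathbf{B}$ with $\mathbf{A}$ its maximal well-ordered initial segment, so $\mathbf{B}\neq\emptyset$ has no first element. Since every element of $\mathbf{A}$ lies below every element of $\mathbf{B}$, a decreasing sequence in $\mathbf{A}+\mathbf{B}$ splits uniquely as a $\mathbf{B}$-part followed by an $\mathbf{A}$-part, so $T_{\mathbf{A}+\mathbf{B}}$ is, up to isomorphism, $T_{\mathbf{B}}$ with a copy of the forest of principal subtrees of $T_{\mathbf{A}}$ grafted at every node; in particular its isomorphism type depends only on those of $T_{\mathbf{A}}$ and $T_{\mathbf{B}}$. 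Combined with Lemma~\ref{l:no-first-element} ($T_{\mathbf{B}}\iso\Suc$ for any nonempty $\mathbf{B}$ with no first element, applied to $\mathbf{B}$ and to $\mathbf{B}+\mathbf{A}+\mathbf{B}$, or to $\mathbf{B}+\om^*$ in your variant), this yields $A_R\iso B_R$ for ill-founded $R$ outright, with no further back-and-forth needed. With that lemma in place your reduction is complete; without it, the deferral to the source is doing all the work in the only step that is not routine.
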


Looking into the detail of the argument, we argue the trees $T_R$
corresponding to $R\in WO$ are 
well founded: regarded as  processes, they're terminating. The depth
of such a 
tree equals the length of the wellorder $R$ and since this depth can be `measured'
by using modal formulas, \emph{nonisomorphic wellorders have non-bisimilar
trees.} 

In the case of an order  relation $S$ on $\N$ that is 
not a wellorder, there must be an infinite branch in $T_S$ (and hence
it can't be bisimilar to any $T_R$ with $R\in WO$), but we can say more: the
tree $T_S$ is bisimilar to the process that results after  attaching a loop to the
initial state of the tree $T'$ associated to the maximal
well-ordered initial segment of $(\N, S)$. For the purpose of having a
manageable example, consider the linear orders $\mathbf{L}\doteq(\{0,1,2\},<_3)$ and
$\mathbf{L'}\doteq(\{0,1,2\}\cup\{\dots,-3,-2,-1\},<')$ where $a <' b$ if $b$ is
negative and $a$ is not, and otherwise $<'$ behaves as the ordering
of the integers. Then the trees corresponding to each of them are respectively
bisimilar to the processes in the following picture:
\begin{center}
\begin{tabular}{ccc}
\begin{tikzpicture}
  [>=latex, thick,
    nodo/.style={thick,minimum size=0cm,inner sep=0cm}]
  \node (root) at (0,0) [nodo] {$\bullet$} ;
  \node (0) at (-0.7,-1) [nodo]  {$\bullet$} ;
  \node (1) at (0,-1)  [nodo] {$\bullet$} ;
  \node (10) at (0,-2)  [nodo] {$\bullet$} ;
  \node (2) at (1.2,-1)  [nodo] {$\bullet$} ;
  \node (20) at (0.5,-2)  [nodo] {$\bullet$} ;
  \node (21) at (1.2,-2)  [nodo] {$\bullet$} ;
  \node (210) at (1.2,-3)  [nodo] {$\bullet$} ;
  
  \draw [->] (root) edge  (0);
  \draw [->] (root) edge  (1);
  \draw [->] (root) edge  (2);
  \draw [->] (1) edge  (10);
  \draw [->] (21) edge  (210);
  \draw [->] (2) edge  (21);
  \draw [->] (2) edge  (20);
\end{tikzpicture}
&
\hspace{5em}
&
\begin{tikzpicture}
  [>=latex, thick,
    nodo/.style={thick,minimum size=0cm,inner sep=0cm}]
  \node (root) at (0,0) [nodo] {$\bullet$} ;
  \node (0) at (-0.7,-1) [nodo]  {$\bullet$} ;
  \node (1) at (0,-1)  [nodo] {$\bullet$} ;
  \node (10) at (0,-2)  [nodo] {$\bullet$} ;
  \node (2) at (1.2,-1)  [nodo] {$\bullet$} ;
  \node (20) at (0.5,-2)  [nodo] {$\bullet$} ;
  \node (21) at (1.2,-2)  [nodo] {$\bullet$} ;
  \node (210) at (1.2,-3)  [nodo] {$\bullet$} ;
  
  \draw [->] (root) edge[out=130, in=50, looseness=13]  (root);
  \draw [->] (root) edge  (0);
  \draw [->] (root) edge  (1);
  \draw [->] (root) edge  (2);
  \draw [->] (1) edge  (10);
  \draw [->] (21) edge  (210);
  \draw [->] (2) edge  (21);
  \draw [->] (2) edge  (20);
\end{tikzpicture}\\
$\sim \ T_{<_3}$ & & $\sim \ T_{<'}$
\end{tabular}
\end{center}
%
We will show that  the bisimilarity type of $T_S$ for non-well founded $S$
only depends on the order type of the maximal well-ordered initial
segment of $(\N, S)$.
Continuing with the previous example, observe that 
$\mathbf{L'}$ is the ordered sum of the poset 
$\mathbf{L}$ and the poset $\mathbf{M}$ of the negative integers; we denote
this ordered sum as $\mathbf{L}+\mathbf{M}$. Although the posets
$\mathbf{L'}$ and $\mathbf{L'}+\mathbf{L'}$  are not isomorphic, they
have the same   maximal well-ordered initial segment (i.e., $\mathbf{L}$)
and hence $T_\mathbf{L'}$ and $T_{\mathbf{L'}+\mathbf{L'}}$
are bisimilar.


Now we may apply the following idea by Dougherty used in one proof
(outlined in
\cite{Kechris-errata}) of Theorem~\ref{th:iso_sigma11_complete}. If
$\mathbf{L}$ is a wellorder, then 
$\mathbf{L}+\mathbf{L}$ is itself a 
wellorder and $T_{\mathbf{L}} \nsim T_{\mathbf{L}+\mathbf{L}}$ since $\mathbf{L}\not\iso\mathbf{L}+\mathbf{L}$. But in case $\mathbf{L}$ is not, the trees $T_{\mathbf{L}}$ and $T_{\mathbf{L}+\mathbf{L}}$ are
indeed bisimilar since $\mathbf{L}$ and $\mathbf{L}+\mathbf{L}$ have the same maximal well-ordered
initial segment.

Then the reduction we are looking for is given by the map $R\mapsto
(T_R,T_{R+R})$, where $R\in LO$.
\bigskip

We'll begin by proving that this map is indeed continuous.
\begin{lemma}\label{l:TR-cont}
  The map $R\mapsto T_R$ from $LO$ to
  $2^\Suc$ is continuous.
\end{lemma}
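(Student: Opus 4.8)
The plan is to reduce continuity of $R\mapsto T_R$ into the Cantor space $2^\Suc$ to a coordinatewise check. The topology of $2^\Suc$ is generated by the subbasic clopen sets $\{Y\subseteq\Suc : s\in Y\}$ and $\{Y\subseteq\Suc : s\notin Y\}$ for $s\in\Suc$, so it suffices to show that for every fixed $s\in\Suc$ the set $U_s\doteq\{R\in LO : s\in T_R\}$ is clopen in $LO$: the preimage of a subbasic set of the first kind is $U_s$, and the preimage of one of the second kind is $LO\setminus U_s$. Hence I would simply fix $s$ and describe $U_s$ explicitly.

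If $|s|\leq 1$ — that is, $s=\vacia$ or $s=\<e\>$ for some $e\in\N$ — then by the definition of $T_R$ we have $s\in T_R$ for every $R$, so $U_s=LO$ is clopen. If $|s|\geq 2$, then, since the two extra pieces $\{\<e\> : e\in\N\}$ and $\{\vacia\}$ in the union defining $T_R$ only contribute sequences of length at most $1$, we have $s\in T_R$ if and only if $s^{|s|-1}\mathrel{R}s^{|s|-2}\mathrel{R}\dots\mathrel{R}s^0$, i.e.\ if and only if $(s^{j+1},s^j)\in R$ for each $j=0,\dots,|s|-2$. Thus
\[
  U_s \;=\; LO\cap\bigcap_{j=0}^{|s|-2}\bigl\{X\subseteq\N\times\N : (s^{j+1},s^j)\in X\bigr\}.
\]
Each set in the intersection is a subbasic clopen subset of $\mathrm{Rel}=2^{\N\times\N}$ of the first kind in~(\ref{eq:4}); a finite intersection of clopen sets is clopen, and intersecting with $LO$ yields a clopen subset of $LO$. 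Hence $U_s$ is clopen for every $s\in\Suc$, so $R\mapsto T_R$ is continuous.

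There is essentially no obstacle here: the entire content of the argument is the observation that, for each fixed candidate node $s$, the membership $s\in T_R$ is decided by finitely many coordinates of $R$, which is exactly what continuity into $2^\Suc$ amounts to. The only point deserving a word of care is that if $s$ has two equal consecutive entries then the condition $(s^{j+1},s^j)\in R$ forces $x\mathrel{R}x$ with $x=s^j$, impossible for $R\in LO$; but this is harmless, since then $U_s=\emptyset$, which is clopen.
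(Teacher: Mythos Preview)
Your proof is correct and follows essentially the same approach as the paper: both arguments reduce continuity to checking that, for each fixed $s\in\Suc$, membership $s\in T_R$ depends on only finitely many coordinates of $R$, and both write the relevant preimage as a finite Boolean combination of subbasic clopens of $2^{\N\times\N}$. The only cosmetic differences are that the paper works with the extension of $R\mapsto T_R$ to all of $2^{\N\times\N}$ and treats the two kinds of subbasic sets separately (showing each preimage is open), whereas you stay in $LO$ and note at once that $U_s$ is clopen; your explicit treatment of the cases $|s|\le 1$ and of repeated consecutive entries is a nice touch but not a genuinely different idea.
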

%
\begin{proof}
  We will show that the function from $2^{\N\times\N}$ to $2^\Suc$
  with the same definition is continuous. It is enough to prove that
  the preimage by $T_\sbt$ of subbasic elements are open.
  Assume $s\in \Suc$. Then 
  \[ R \in (T_\sbt)^{-1}[\{A\subseteq 2^\Suc : s \in A\}] \iff s\in T_R
  \iff  \forall j : 0<j<|s| \ent s^j\mathrel{R}s^{j-1}.\]
  Hence the preimage can be written in the form 
  \[(T_\sbt)^{-1}[\{A\subseteq 2^\Suc : s \in A\}]  = \bigl\{R\subseteq {\N\times\N} :
  \{(s^{|s|-1},s^{|s|-2}),\dots,(s^1,s^0)\} \subseteq R\bigr\},\] 
  which is basic open. The other case is very similar. For $t\in \Suc$,
  \[ 
  t \notin T_R
  \iff  \exists j : 0<j<|t| \et t^j\mathrel{\centernot{R}}t^{j-1},\]
  hence we have
  \[(T_\sbt)^{-1}[\{A\subseteq 2^\Suc : t \notin A\}] =
  \bigcup_{0<j<|t|}\{ R \subseteq  \N\times\N :  (t^j,t^{j-1})\notin
  R\},\] 
  again an open set.
\end{proof}
We'll need a definition of sum of linear orders such that $LO$ is
closed under this operation. Given $R,R'\in LO$, let
\[(R+R')(n,m)\doteq \begin{cases}
                     1                            & 2\mid n \et 2\nmid m \\
                     R(\tfrac{n}{2},\tfrac{m}{2}) & 2\mid n \et 2\mid m\\
                     R'(\tfrac{n-1}{2},\tfrac{m-1}{2}) & 2\nmid n \et 2\nmid m \\
                     0 & 2\nmid n \et 2\mid  m. 
                     \end{cases}\]
Hence we have the following straightforward lemma, and its corollary
rounds out the proof of continuity.
\begin{lemma}\label{l:sum-continuous}
$(\N,R+R')$ is isomorphic to the ordered sum of $(\N,R)$ and
  $(\N,R')$, and $(R,R')\mapsto R+R'$ is continuous from $LO\times LO$
  to $LO$.
\end{lemma}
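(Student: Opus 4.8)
The plan is to verify the two assertions separately. For the order-isomorphism claim, I would exhibit the obvious bijection $\N \to \N \sqcup \N$ that sends an even number $n$ to the element $\tfrac n2$ of the first copy and an odd number $n$ to the element $\tfrac{n-1}2$ of the second copy. Reading off the four cases in the definition of $R+R'$ one checks directly that this bijection carries $R+R'$ to the ordered sum relation: two elements in the first copy are compared by $R$ (second case), two in the second copy by $R'$ (third case), and anything in the first copy precedes anything in the second copy (first and fourth cases). One should also remark that $LO$ is closed under $+$: irreflexivity, totality, and transitivity of $R+R'$ are inherited from those of $R$ and $R'$ — transitivity being the only case requiring a short argument, split according to how many of the three elements lie in each copy. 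This is routine and I would not spell it out in full.

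For continuity, I would argue exactly as in the proof of Lemma~\ref{l:TR-cont}: it suffices to show that the preimage of each subbasic open set of $LO\subseteq 2^{\N\times\N}$ under $(R,R')\mapsto R+R'$ is open in $LO\times LO$. Fix $(n,m)\in\N\times\N$. The set $\{Q\in LO : (n,m)\in Q\}$ pulls back, according to the parity of $n$ and $m$, to one of: the whole space $LO\times LO$ (when $n$ even, $m$ odd); $\{(R,R') : (\tfrac n2,\tfrac m2)\in R\}$ (both even); $\{(R,R') : (\tfrac{n-1}2,\tfrac{m-1}2)\in R'\}$ (both odd); or $\emptyset$ (when $n$ odd, $m$ even). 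Each of these is open in the product topology. The complementary subbasic sets $\{Q : (n,m)\notin Q\}$ are handled identically, with the roles of ``whole space'' and $\emptyset$ swapped and the membership conditions negated (which are also open, being preimages of subbasic opens under the projections). Hence the map is continuous.

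There is no real obstacle here; the only point demanding the slightest care is confirming that the four cases of the definition are exhaustive and mutually exclusive as a function of the parities of $n$ and $m$ — they plainly are — and, for the closure-under-$+$ remark, the transitivity check across the three copy-membership configurations of the three elements involved. Both are straightforward bookkeeping, which is why the statement is labelled straightforward in the text.
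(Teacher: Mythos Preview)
Your proposal is correct. The paper itself omits the proof, calling the lemma ``straightforward,'' and your argument is exactly the natural elaboration one would expect: the explicit parity-based bijection for the isomorphism claim, and the subbasic-preimage check (in the style of Lemma~\ref{l:TR-cont}) for continuity.
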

\begin{corollary}\label{c:reduction-cont}
The map  $R\mapsto
(T_R,T_{R+R})$ is continuous from $LO$   to $\Tree\times\Tree$.
\end{corollary}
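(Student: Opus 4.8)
The plan is to factor the map through compositions of continuous maps already at hand, using the universal property of the product topology: a map into $\Tree\times\Tree$ is continuous exactly when both of its coordinate maps are continuous, so it suffices to check separately that $R\mapsto T_R$ and $R\mapsto T_{R+R}$ are continuous from $LO$ into $\Tree$.

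For the first coordinate I would simply invoke Lemma~\ref{l:TR-cont}: the map $R\mapsto T_R$ is continuous from $LO$ into $2^{\Suc}$. Since each set $T_R$ is closed under prefixes (a prefix of a finite $R$-decreasing sequence is again $R$-decreasing, and prefixes of the singletons $\<e\>$ and of $\vacia$ are handled trivially), this map actually lands in the closed subspace $\Tree\subseteq 2^{\Suc}$; corestricting a continuous map to a subspace that contains its image preserves continuity. For the second coordinate I would compose three continuous maps: the diagonal $R\mapsto (R,R)$, which is continuous by the universal property of the product; then the addition map $(R,R')\mapsto R+R'$, which is continuous from $LO\times LO$ to $LO$ by Lemma~\ref{l:sum-continuous} — here it matters that the codomain is again $LO$, so that $R+R$ is a genuine strict linear order and $T_{R+R}$ is defined; and finally $R\mapsto T_R$ once more via Lemma~\ref{l:TR-cont}, landing in $\Tree$ as before. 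The composite $R\mapsto T_{R+R}$ is therefore continuous, and the corollary follows.

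I do not expect a real obstacle here: the statement is essentially bookkeeping over Lemmas~\ref{l:TR-cont} and~\ref{l:sum-continuous} together with the standard fact that continuity into a product is checked coordinatewise. The only points that deserve a line of justification are that the constructions keep us in the correct spaces — that $T_R,T_{R+R}\in\Tree$ and that $R+R\in LO$ — both of which are immediate from the definitions (the latter being part of the statement of Lemma~\ref{l:sum-continuous}).
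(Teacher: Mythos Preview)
Your proposal is correct and matches the paper's intended argument: the corollary is stated without proof precisely because it is the routine composition of Lemmas~\ref{l:TR-cont} and~\ref{l:sum-continuous} together with the coordinatewise criterion for continuity into a product. Your extra care in noting that the image lands in $\Tree$ and that $R+R\in LO$ is appropriate but not something the paper spells out.
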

Now we want to check that wellorders of different type can't give rise to
bisimilar trees. For that we define by recursion on $\alpha<\om_1$ the following 
modal formulas:
\begin{itemize}
\item $\phi_0\doteq \top$.
\item $\phi_{\alpha+1}\doteq \dia \phi_\alpha$.
\item $\phi_\lda\doteq \bigwedge_{\beta<\lda}\phi_\beta$, for limit $\lda$.
\end{itemize}
When satisfied at the root of a tree, formula $\phi_\alpha$ states
that its depth is as least $\alpha$.  We record two standard results.
\begin{prop}
  For a wellorder $(\N,R)$ of type $\alpha$,   $T_R, \vacia \models
  \phi_\beta$ if and only if  $\beta\leq\alpha$. If  $(\N,R)$  is
  not well founded,  $T_R, \vacia \models \phi_\beta$ for all $\beta<\om_1$.
\end{prop}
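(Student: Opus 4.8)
The plan is to identify, for well-founded processes, satisfaction of $\phi_\beta$ with a lower bound on the ordinal rank of a state, and then to compute that rank for the trees $T_R$; the non-well-founded case is then disposed of by a separate, even simpler induction.

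First I would recall the ordinal rank: in any LTS, for a state $s$ from which no infinite path $s\pred s_1\pred s_2\pred\cdots$ departs, set $\rho(s)\doteq\sup\{\rho(s')+1 : s\pred s'\}$ (with $\sup$ of the empty set equal to $0$), well defined by $\pred$-recursion. The core claim is that for every $\beta<\om_1$ and every such $s$ one has $s\models\phi_\beta$ iff $\beta\le\rho(s)$, proved by transfinite induction on $\beta$. For $\beta=0$ this is trivial since $\phi_0=\top$ and $0\le\rho(s)$. For $\beta=\delta+1$, $\phi_{\delta+1}=\dia\phi_\delta$ holds at $s$ iff some successor $s'$ satisfies $\phi_\delta$, iff (induction hypothesis) some successor has $\rho(s')\ge\delta$; and this is equivalent to $\rho(s)\ge\delta+1$, the only implication needing an argument being that if $\sup\{\rho(s')+1 : s\pred s'\}\ge\delta+1$ then, since $\delta+1$ is a successor, one single term $\rho(s')+1$ already reaches it, i.e.\ $\rho(s')\ge\delta$. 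For limit $\lambda$, $\phi_\lambda=\bigwedge_{\delta<\lambda}\phi_\delta$ holds at $s$ iff $\rho(s)\ge\delta$ for all $\delta<\lambda$ (induction hypothesis), iff $\rho(s)\ge\lambda$.

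For the non-well-founded situation I would prove in parallel: if an infinite path $s_0\pred s_1\pred s_2\pred\cdots$ starts at $s_0$, then $s_0\models\phi_\beta$ for all $\beta<\om_1$. Since every state occurring on an infinite path again begins an infinite path, this is another straightforward transfinite induction on $\beta$: at a successor $\delta+1$ one uses the edge $s_i\pred s_{i+1}$ and the induction hypothesis at $s_{i+1}$, and at limits one takes the conjunction. It then remains to specialize to $T_R$. If $R$ has type $\alpha<\om_1$, then $T_R$ is well founded, since an infinite path from $\vacia$ would unwind into an infinite $R$-descending sequence; so $\rho$ is defined on $T_R$, and a short transfinite induction shows that a nonempty node $s$ with last entry $e$ has $\rho(s)=\mathrm{otype}(\{e' : e'\mathrel{R}e\})$: its children are the $s\ap e'$ with $e'\mathrel{R}e$, the isomorphism $(\N,R)\iso\alpha$ restricts to a bijection of $\{e' : e'\mathrel{R}e\}$ with the ordinals below $\mathrm{otype}(\{e' : e'\mathrel{R}e\})$, and $\sup\{\gamma+1 : \gamma<\eta\}=\eta$ for every ordinal $\eta$. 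Hence $\rho(\vacia)=\sup\{\rho(\<e\>)+1 : e\in\N\}=\sup\{\beta+1 : \beta<\alpha\}=\alpha$, and the core claim gives $T_R,\vacia\models\phi_\beta$ iff $\beta\le\alpha$. If instead $R$ is not well founded, fix an infinite $R$-descending sequence $e_0,e_1,\dots$; then $\vacia\pred\<e_0\>\pred\<e_0,e_1\>\pred\cdots$ is an infinite path in $T_R$, so the second step yields $T_R,\vacia\models\phi_\beta$ for all $\beta<\om_1$.

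The only mildly delicate point is the successor step of the core claim, where one must argue that $\rho(s)\ge\delta+1$ is equivalent to the existence of a successor of rank $\ge\delta$ — precisely where successor ordinals behave differently from limits. Everything else is routine bookkeeping with $\pred$-recursion and with the ordinary Kripke semantics of Hennessy--Milner formulas, which for these non-probabilistic processes is what $\models$ amounts to here.
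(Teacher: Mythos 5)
Your proof is correct, and it is exactly the standard argument the paper has in mind: the paper records this proposition without proof, noting only that $\phi_\alpha$ ``states that the depth is at least $\alpha$'', which is precisely the rank characterization $s\models\phi_\beta\iff\beta\le\rho(s)$ that you establish by transfinite induction (together with the computation $\rho(\vacia)=\alpha$ for $T_R$ with $R$ of type $\alpha$, and the infinite-path argument for the non-well-founded case). You have correctly isolated the one delicate point, namely that at a successor stage the supremum $\sup\{\rho(s')+1\}\ge\delta+1$ is attained by a single successor, so no further comparison with the paper is needed.
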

\begin{corollary}\label{c:bisim-separ-WO}
 If $(\N,R)$ is a wellorder and $R'\in LO$, $T_R, \vacia  \sim T_{R'}, \vacia \iff
 (\N,R)\iso(\N,R')$. In particular,  $T_R, \vacia  \nsim T_{R+R}, \vacia$.
\end{corollary}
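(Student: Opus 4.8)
The plan is to read off the Corollary from the preceding Proposition on the depth formulas $\phi_\alpha$, combined with the fact that bisimilar states of an LTS satisfy exactly the same formulas of the (countably infinitary) modal logic. This invariance is the classical Hennessy--Milner observation, and it is unaffected by the presence of countable conjunctions; alternatively, since every $T_R$ is an image-countable LTS, one may equip it with the powerset $\sigma$-algebra, view it as a MLTS, and invoke Proposition~\ref{pr:all-bisim-the-same} and Theorem~\ref{theorem:logic_is_sound}, after noting that each $\phi_\alpha$ with $\alpha<\om_1$ belongs to $\calL$ because a countable conjunction of $\calL$-formulas is again an $\calL$-formula and every limit ordinal $\lambda<\om_1$ is countable. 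The only point that deserves care is making this invariance claim precise for the infinitary $\phi_\alpha$'s; everything else is ordinal bookkeeping and an elementary tree construction, so I do not foresee a genuine obstacle.

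For the implication $(\Rightarrow)$, assume $T_R,\vacia\sim T_{R'},\vacia$ with $(\N,R)$ a wellorder of type $\alpha<\om_1$. First I would show that $(\N,R')$ must be a wellorder: otherwise the Proposition gives $T_{R'},\vacia\models\phi_{\alpha+1}$, whereas the Proposition also gives $T_R,\vacia\models\phi_\beta$ only for $\beta\le\alpha$, so $T_R,\vacia\not\models\phi_{\alpha+1}$, contradicting invariance. Hence $(\N,R')$ is a wellorder of some type $\beta<\om_1$, and applying the Proposition on both sides together with invariance yields $\{\gamma:\gamma\le\alpha\}=\{\gamma:\gamma\le\beta\}$, so $\alpha=\beta$, and therefore $(\N,R)\iso(\N,R')$, since wellorders of the same ordinal type are order-isomorphic.

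For $(\Leftarrow)$, take an order isomorphism $h\colon(\N,R)\to(\N,R')$ and extend it componentwise to $\hat h\colon T_R\to T_{R'}$, $\hat h(\<s^0,\dots,s^{k-1}\>)=\<h(s^0),\dots,h(s^{k-1})\>$. Since $h$ is an order isomorphism this is a well-defined bijection between the two trees, it fixes $\vacia$, and it commutes with $\pred$; hence its graph is a bisimulation linking the two roots, so $T_R,\vacia\sim T_{R'},\vacia$. Finally, for the ``in particular'' clause: by Lemma~\ref{l:sum-continuous} we have $R+R\in LO$ and $(\N,R+R)$ has order type $\alpha+\alpha$; since $\N\neq\emptyset$ we have $\alpha\ge 1$, so $\alpha+\alpha>\alpha$ by strict monotonicity of ordinal addition in its right argument, whence $(\N,R)\not\iso(\N,R+R)$, and the equivalence just established gives $T_R,\vacia\nsim T_{R+R},\vacia$.
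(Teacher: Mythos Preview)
Your proof is correct and follows exactly the route the paper intends: the Corollary is stated without proof in the paper, as an immediate consequence of the preceding Proposition on the depth formulas $\phi_\alpha$, and you have simply spelled out the obvious details (invariance of the $\phi_\alpha$ under bisimilarity for $(\Rightarrow)$, the induced tree isomorphism for $(\Leftarrow)$, and the ordinal inequality $\alpha+\alpha>\alpha$ for the final clause).
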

%
Now we turn to the proof that if $\mathbf{L'}$ is not a wellorder,
then the trees $T_\mathbf{L'}$ and $T_{\mathbf{L'}+\mathbf{L'}}$ are
bisimilar. For this we  will characterize  
the process $T_{\mathbf{A}+\mathbf{B}}$ corresponding to the ordered sum of
posets $\mathbf{A}$ and $\mathbf{B}$ by using a kind of \emph{disrupt}
construction between $T_{\mathbf{A}}$ and $T_{\mathbf{B}}$. 
More generally, take two processes (represented by binary
structures) $\mathbf{C} = (C,R)$ and $\mathbf{D}=(D,Q)$  and 
``execute them in succession'': taking $c\in C$ and $d\in D$, we define
a new state $c\disr d$ that starts behaving as $c$ but in any moment can be
``disrupted'' and then it behaves as $d$%
   \footnote{For the process-algebra minded, the operator
     $\disr$ is very much like LOTOS disabling
     operator~\cite{DBLP:journals/cn/BolognesiB87}.}%
. Formally, let  two binary
structures $\mathbf{C}$ and $\mathbf{D}$ as above be given with $C$ and $D$
disjoint, and define $\mathbf{C}\disr\mathbf{D}$ to be a new binary 
structure with universe $(C\times D) \cup D$ (where we write an
ordered pair $(c,d)$ as $c\disr d$) equipped with the least binary relation $\to$
such that
\begin{align}
  c\disr d\,\to\, c'\disr d &\iff c\mathrel{R}c'\label{eq:1}\\
  c\disr d\,\to\, d' &\iff d\mathrel{Q}d'\label{eq:2}\\
  d\,\to\, d' &\iff d\mathrel{Q}d'\label{eq:3},
\end{align}
for all $c,c'\in C$ and $d,d'\in D$. In the following Lemma, the
binary structures $\mathbf{C}$ and $\mathbf{D}$  considered are trees
with $R$ and $Q$ both being the successor relation $\prec$.
\begin{lemma}\label{l:sum-to-disr}
Let  $\mathbf{A} = (A,R)$ and $\mathbf{B}=(B,T)$  be strict linear
orders with $A$ and $B$ disjoint. Then ${T_{\mathbf{A}+\mathbf{B}},\vacia}\; \sim\;
{T_{\mathbf{B}}\disr T_{\mathbf{A}},\vacia\disr\vacia}$.
\end{lemma}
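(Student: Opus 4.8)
The plan is to exhibit an explicit bisimulation between the two processes and verify the back-and-forth conditions. Recall that $T_{\mathbf{A}+\mathbf{B}}$ consists of finite sequences over $A\sqcup B$ that are strictly decreasing in the order $R+T$; since every element of $B$ precedes every element of $A$ in this sum, such a sequence looks like a (possibly empty) strictly $T$-decreasing block over $B$ followed by a strictly $R$-decreasing block over $A$ (plus the degenerate one-element and empty sequences that $T_{\bullet}$ always throws in). On the other side, a state of $T_{\mathbf{B}}\disr T_{\mathbf{A}}$ is either a pair $u\disr v$ with $u$ a node of $T_{\mathbf{B}}$ and $v$ a node of $T_{\mathbf{A}}$, or a bare node $v$ of $T_{\mathbf{A}}$; by \eqref{eq:1}--\eqref{eq:3} the first component of a pair only ever shrinks along $\prec$ in $T_{\mathbf{B}}$ until it is ``disrupted'', after which we move to the $T_{\mathbf{A}}$-part and stay there. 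So the natural correspondence is: a sequence $w = w_B \ap w_A$ (with $w_B$ the $B$-block, $w_A$ the $A$-block) of $T_{\mathbf{A}+\mathbf{B}}$ corresponds to $w_B \disr w_A$ when $w_B\neq\vacia$, and to the bare node $w_A$ when $w_B=\vacia$. I would take $\mathcal{Z}$ to be the relation pairing each such $w$ with its image, taking care of the boundary sequences (the roots correspond, $\vacia \leftrightarrow \vacia\disr\vacia$; one-element sequences $\langle b\rangle$ correspond to $\langle b\rangle\disr\vacia$, and $\langle a\rangle$ to $\langle a\rangle$, etc.).

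The verification then splits into the usual two simulation clauses, checked separately according to which ``region'' the state lies in. If $w = w_B\ap w_A$ with $w_B\neq\vacia$, a transition in $T_{\mathbf{A}+\mathbf{B}}$ either extends $w$ by an element of $A$ below the last entry of $w_A$ (matched, via \eqref{eq:1}, by extending the $T_{\mathbf{A}}$-component $w_A$) or extends $w$ by an element of $B$ below the last entry of $w_B$ (matched, via \eqref{eq:2}, by a disrupt transition of $w_B\disr w_A$ to the $B$-successor — wait: here one must be careful, as a transition adding a $B$-element below $w_B$ is a transition \emph{within} the $B$-block, handled by \eqref{eq:1} applied to the $T_{\mathbf{B}}$-component, whereas \eqref{eq:2} is what lets us jump to a bare $A$-node — I would sort out exactly this correspondence by matching "append to the $A$-block" with \eqref{eq:2} or the $\to$ of $T_\mathbf{A}$ and "append to the $B$-block" with \eqref{eq:1}). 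If $w_B = \vacia$, so $w = w_A$ is a bare $A$-node, transitions only append $A$-elements and are matched directly inside $T_{\mathbf{A}}$. Conversely, every transition on the disrupt side is of one of the three shapes \eqref{eq:1}--\eqref{eq:3}, and each is seen to arise from a unique legal extension on the sum side. One also checks that $\mathcal{Z}$ is symmetric (or, equivalently, exhibit a bisimulation by taking $\mathcal{Z}\cup\mathcal{Z}^{-1}$, which is harmless since the correspondence is essentially a bijection on reachable states).

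The main obstacle, and the part deserving genuine care rather than hand-waving, is the bookkeeping at the ``seam'' between the two blocks and the degenerate sequences: $T_{\bullet}$ by definition adjoins all singletons $\langle e\rangle$ and the empty sequence regardless of the order, so a decreasing sequence over $A\sqcup B$ of length $\ge 2$ genuinely decomposes into a $B$-part then an $A$-part, but length-$0$ and length-$1$ sequences must be slotted into the correspondence by hand, and one must confirm that the disrupt construction's extra states (bare $d\in D$ reached by \eqref{eq:2}) line up exactly with the ``$B$-block became empty after we started in $A$'' situation — which in fact never happens, since once we are in the $A$-block we cannot return to $B$; the bare $A$-nodes correspond precisely to sum-sequences with empty $B$-block. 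Checking that no spurious transitions appear (e.g.\ that $\disr$ does not allow returning to the $B$-component) is where \eqref{eq:1}--\eqref{eq:3} being the \emph{least} relation with those closure properties is used. Once the region analysis is set up cleanly the three transition shapes match the three ways of extending a decreasing sequence, and the bisimulation property follows; since $\mathcal{Z}$ relates $\vacia$ to $\vacia\disr\vacia$, the roots are bisimilar, which is the claim.
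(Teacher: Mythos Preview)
Your overall picture—decompose a decreasing sequence into a $B$-block followed by an $A$-block and match this with the two phases of the disrupt—is correct, but the explicit relation you write down is \emph{not} a bisimulation. The failure is at mixed sequences $w_B\ap w_A$ with both blocks nonempty, which you pair with the product state $w_B\disr w_A$. Check the back clause there: rule~(\ref{eq:1}) gives a transition $w_B\disr w_A \to w_B'\disr w_A$ whenever $w_B\pred w_B'$ in $T_{\mathbf B}$, but on the tree side there is no transition $w_B\ap w_A \to w_B'\ap w_A$, since in $T_{\mathbf{A}+\mathbf{B}}$ one can only append at the end and, once $w_A\neq\vacia$, no $B$-element can be appended. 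Dually the forth clause fails: appending $a'$ on the left produces $w_B\ap w_A\ap a'$, which your relation sends to $w_B\disr(w_A\ap a')$, yet no rule takes $w_B\disr w_A$ to $w_B\disr(w_A\ap a')$—rule~(\ref{eq:1}) only advances the first component, and rule~(\ref{eq:2}) lands in a \emph{bare} node. (Relatedly, the states $w_B\disr w_A$ with $w_A\neq\vacia$ are not even reachable from $\vacia\disr\vacia$, which already signals that they should not occur as right-hand sides.)

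The fix, and this is exactly what the paper does, is to \emph{forget} the $B$-history once the $A$-block has started: relate every $t\ap s$ with $t\in T_{\mathbf B}$ and $s\in T_{\mathbf A}\setminus\{\vacia\}$ to the bare node $s$, and relate each pure $B$-sequence $t$ (including $\vacia$) to $t\disr\vacia$. This relation is many-to-one on the left—all $t\ap s$ with the same $s$ collapse—so your remark that the correspondence is ``essentially a bijection on reachable states'' is a symptom of the same error. With the corrected relation the case split you sketch goes through cleanly: at pairs $(t,\,t\disr\vacia)$ rules~(\ref{eq:1}) and~(\ref{eq:2}) match extensions by a $B$-element and by an $A$-element respectively, and at pairs $(t\ap s,\,s)$ rule~(\ref{eq:3}) matches the only possible move (append an $A$-element).
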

\begin{proof}
  Observe that  any decreasing sequence in $\mathbf{A}+\mathbf{B}$
  decomposes uniquely as the concatenation of a sequence $s$ in $\mathbf{A}$
  after a sequence $t$ in $\mathbf{B}$. We'll check that  the relation
  \[\th \doteq \{(t,t\disr\vacia) : t\in T_{\mathbf B}\} \cup \{(t\ap s,
  s): t\in T_{\mathbf B}, s\in T_{\mathbf A}\setminus\{\vacia\}\}\]
  is a bisimulation. Note that $(\vacia,\vacia\disr\vacia)\in\th$.

  We  check the forth and back property for each of type of pairs in
  $\th$. Assume $(t,t\disr\vacia)\in\th$, hence $t\in
  T_{\mathbf{B}}$ (i.e., it is a decreasing sequence in
  $\mathbf{A}+\mathbf{B}$ with no elements from $A$). Further, assume
  we have a ``transition'' 
  $t\prec t\ap b$ in $T_{\bA+\bB}$ with  $b\in B$ (this means that $t\ap
  b\in T_\bB$). The forth property 
  follows immediately since $t\disr \vacia \to t\ap b\disr \vacia$ by
  (\ref{eq:1}) and $( t\ap b,t\ap b\disr \vacia)\in \th$. In case of a
  transition $t\prec t\ap a$ with $a\in A$, we first see that
  $t\disr\vacia\to \<a\>$ by (\ref{eq:2}) (with $c=t$, $d=\vacia$, and
  $d'=\<a\>$). But now $(t\ap a, \<a\>)\in \th$, and the forth property
  follows. The back property is checked by using very similar
  arguments.

  Now assume $(t\ap s, s)\in \th$ where $s\in T_\mathbf{A}$
  and $t\in T_\mathbf{B}$. Then 
  \[t\ap s \prec t\ap s\ap a \iff s \prec s\ap a \iff s\to s\ap a\]
  for each $a\in A$, where the last equivalence holds  by (\ref{eq:3}) (with $d=s$,
  $d'=s\ap a$). This gives us the forth and back property for
  this case.
\end{proof}
\begin{lemma}\label{l:no-first-element}
If $\mathbf{A} = (A,R)$ is a countable strict linear
order without  first element, then $(T_{\mathbf{A}},\pred,\vacia)\iso
(\Suc,\pred,\vacia)$. In particular, $T_{\mathbf{A}},\vacia \sim \Suc,\vacia$.
\end{lemma}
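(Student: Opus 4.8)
The statement has two parts, the first being the real content: if $\mathbf{A}=(A,R)$ is a countable strict linear order with no first element, then the tree $T_{\mathbf A}$ of finite decreasing sequences is isomorphic, as a rooted LTS, to the full tree $\Suc = \N^*$ with the successor relation $\pred$. The ``in particular'' clause then follows instantly, since isomorphic LTS are bisimilar and an isomorphism carries root to root. So the plan is to construct an explicit isomorphism of rooted trees.

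The key structural observation is that $T_{\mathbf A}$ is a tree in which (i) the root $\vacia$ has infinitely many children, namely all $\<e\>$ with $e\in A$, and (ii) every non-root node $s = \<s^0,\dots,s^{k-1}\>$, which is a decreasing chain ending in $s^{k-1}$, has as children exactly those $s\ap e$ with $e\mathrel{R}s^{k-1}$ — and since $\mathbf A$ has no first element there is no minimal element, so \emph{every} element $s^{k-1}$ has infinitely many $R$-predecessors in $A$; hence every node of $T_{\mathbf A}$ has countably infinitely many children. The same is trivially true of $\Suc$. I would therefore prove the general fact that any two rooted trees in which every node has exactly $\aleph_0$ children are isomorphic (as rooted trees, equivalently as LTS with one label), by a straightforward recursion on the length of sequences: fix for each node of each tree a bijection between its child-set and $\N$, and define the isomorphism level by level, sending the root to the root and, having mapped a node $u$ of $T_{\mathbf A}$ to a node $v$ of $\Suc$, mapping the $n$th child of $u$ to the $n$th child of $v$. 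One checks by induction on $|s|$ that this map is a well-defined bijection $T_{\mathbf A}\to\Suc$ preserving and reflecting $\pred$ in both directions, and fixing the root.

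Concretely, I would spell out the recursion so that it is visibly well-defined: pick, once and for all, a bijection $g_s : \{e\in A : e\mathrel{R}s^{|s|-1}\}\to\N$ for each non-root $s\in T_{\mathbf A}$ and a bijection $g_\vacia : A\to\N$ for the root (both sets are countably infinite by the no-first-element hypothesis); then set $h(\vacia)=\vacia$ and $h(s\ap e)=h(s)\ap g_s(e)$. An easy induction shows $h$ maps $T_{\mathbf A}$ into $\Suc$, is injective (distinct sequences first differ at some coordinate, where the $g$'s are injective), is surjective (build the preimage of a finite sequence coordinate by coordinate, using surjectivity of the $g$'s), and satisfies $s\pred s'$ in $T_{\mathbf A}$ iff $h(s)\pred h(s')$ in $\Suc$.

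I do not expect a genuine obstacle here; the lemma is essentially a packaging of the elementary ``all countably-branching rooted trees of height $\omega$ are isomorphic'' fact. The only point requiring a moment's care — and the one place the hypothesis is used — is verifying that every node of $T_{\mathbf A}$ really does have infinitely many children, i.e. that $\{e\in A : e\mathrel{R}a\}$ is infinite for every $a\in A$; this is exactly the ``no first element'' assumption applied inside the linear order (no first element and linearity force infinitely many elements below any given one). Everything else is the routine induction sketched above.
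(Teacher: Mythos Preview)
Your proposal is correct and follows exactly the line the paper takes: the paper's proof is the single sentence ``It is enough to note that under the hypothesis, $T_{\mathbf{A}}$ is a (countable) tree in which every node has infinitely many successors,'' leaving the existence of an isomorphism between any two such rooted trees as folklore. You have simply made explicit the recursion that the paper leaves implicit, and correctly identified that the no-first-element hypothesis is precisely what guarantees infinite branching at every node.
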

\begin{proof}
It is enough to note that under the hypothesis, $T_{\mathbf{A}}$ is a (countable)
tree in which every node has infinitely many successors. 
\end{proof}
\begin{corollary}\label{c:NWO-R+R}
If $R\in\NWO$, $T_R,\vacia \sim T_{R+R},\vacia$.
\end{corollary}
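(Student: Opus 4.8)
The plan is to reduce Corollary~\ref{c:NWO-R+R} to the preceding lemmas by analyzing the structure of a linear order that fails to be a wellorder. First I would recall that $R\in\NWO$ means $(\N,R)$ is a strict linear order with an infinite descending chain. Let $\mathbf{W}$ be the maximal well-ordered initial segment of $(\N,R)$ and let $\mathbf{A}$ be the complementary final segment, so that $(\N,R)\iso\mathbf{W}+\mathbf{A}$; since $R\notin\NWO$ would contradict non-wellfoundedness, $\mathbf{A}$ is nonempty, and by maximality of $\mathbf{W}$ the order $\mathbf{A}$ has no first element. The key point is that the bisimilarity type of $T_R$ depends only on $\mathbf{W}$: applying Lemma~\ref{l:sum-to-disr} gives $T_{\mathbf{W}+\mathbf{A}},\vacia \sim T_{\mathbf{A}}\disr T_{\mathbf{W}},\vacia\disr\vacia$, and applying Lemma~\ref{l:no-first-element} gives $T_{\mathbf{A}},\vacia\sim\Suc,\vacia$, the full $\N$-branching tree. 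So $T_R$ is bisimilar to $\Suc\disr T_{\mathbf{W}}$ with initial state $\vacia\disr\vacia$.

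Next I would carry out the same analysis for $R+R$. By Lemma~\ref{l:sum-continuous}, $(\N,R+R)$ is isomorphic to the ordered sum of $(\N,R)$ with itself, i.e. $\mathbf{W}+\mathbf{A}+\mathbf{W}+\mathbf{A}$. Its maximal well-ordered initial segment is again exactly $\mathbf{W}$ (the copy of $\mathbf{A}$ sitting after the first $\mathbf{W}$ already destroys wellfoundedness, so nothing past the first $\mathbf{W}$ contributes), and the complementary final segment $\mathbf{A}+\mathbf{W}+\mathbf{A}$ again has no first element because $\mathbf{A}$ doesn't. Hence by the same two lemmas, $T_{R+R},\vacia\sim\Suc\disr T_{\mathbf{W}},\vacia\disr\vacia$. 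Since bisimilarity is transitive and symmetric, $T_R,\vacia\sim T_{R+R},\vacia$, which is the claim.

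The main obstacle I anticipate is the bookkeeping around the decomposition $(\N,R)\iso\mathbf{W}+\mathbf{A}$ and, in particular, verifying carefully that the maximal well-ordered initial segment of $R+R$ is literally the same $\mathbf{W}$ (up to isomorphism) and that the tails have no first element in both cases; this is where one must be precise about what ``maximal well-ordered initial segment'' means and why adjoining more stuff on the right never enlarges it once the order is already ill-founded. A secondary subtlety is that Lemmas~\ref{l:sum-to-disr} and~\ref{l:no-first-element} are stated for strict linear orders with \emph{disjoint} underlying sets, so when splitting $(\N,R)$ one should pass to an isomorphic copy of the sum on a disjoint union and invoke the fact (implicit from the modal-formula characterization, or just from isomorphism of the associated trees) that $R\mapsto T_R,\vacia$ respects isomorphism of linear orders. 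Once these points are pinned down, everything else is a short chain of $\sim$-equalities, so I would keep the written proof correspondingly brief.
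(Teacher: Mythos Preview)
Your proposal is correct and follows essentially the same route as the paper's proof: decompose $(\N,R)$ as its maximal well-ordered initial segment plus a tail with no first element, apply Lemma~\ref{l:sum-to-disr} to pass to the disrupt construction, and use Lemma~\ref{l:no-first-element} to see that the tail's tree is (isomorphic to) $\Suc$; then repeat for $R+R$ and conclude by transitivity. The only cosmetic difference is that the paper compares $T_{\mathbf{B}}$ and $T_{\mathbf{B}+\mathbf{A}+\mathbf{B}}$ directly via their common isomorphism type $\Suc$, whereas you name $\Suc$ explicitly as the intermediate object; the bookkeeping concerns you flag (disjointness, isomorphism-invariance of $T_{\sbt}$) are exactly the ones the paper handles implicitly.
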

\begin{proof}
Decompose $\mathbf{N}\doteq(\N,R)$ as $\mathbf{A} + \mathbf{B}$,  where $\mathbf{A}$ is the 
maximal well-ordered initial segment of $\mathbf{N}$ and hence
$\mathbf{B}$ has no first element. Then
$\mathbf{N}+\mathbf{N}\iso\mathbf{A} + \mathbf{B}+\mathbf{A} +
\mathbf{B}\iso \mathbf{A} + \mathbf{C}$, where $\mathbf{C} \doteq \mathbf{B}+\mathbf{A} +
\mathbf{B}$ has no
first element. Hence
\begin{align*}
   T_{R+R},\vacia & \sim T_{\mathbf{A}+\mathbf{C}},\vacia 
     && \text{since }T_{R+R} = T_{\mathbf{N}+\mathbf{N}} \iso
   T_{\mathbf{A}+\mathbf{C}},\\
   & \sim   {T_{\mathbf{C}}\disr T_{\mathbf{A}},\vacia\disr\vacia} 
     && \text{by Lemma~\ref{l:sum-to-disr},}\\
   & \sim
   {T_{\mathbf{B}}\disr T_{\mathbf{A}},\vacia\disr\vacia} 
     && \text{by Lemma~\ref{l:no-first-element}, }T_{\mathbf{C}}\iso T_{\mathbf{B}}\\
   & \sim
   T_{\mathbf{A}+\mathbf{B}},\vacia 
     && \text{by  Lemma~\ref{l:sum-to-disr},}\\
   & \sim T_{R},\vacia.
\end{align*}
\end{proof}
At this point we can gather all the previous lemmas and prove the
following technical result, which is the main ingredient for the
developments in the next section.
\begin{theorem}
Bisimilarity on $\Tree$ is $\Sig_1^1$-hard, and hence not Borel.
\end{theorem}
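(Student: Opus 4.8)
The plan is to show that the map $R \mapsto (T_R, T_{R+R})$ is a continuous reduction of $\NWO$ to the relation of bisimilarity on $\Tree$. Since $\NWO$ is $\Sig_1^1$-hard (indeed $\Sig_1^1$-complete) by the Lusin--Sierpi\'nski theorem, and since $\Sig_1^1$-hardness is transferred along Wadge reductions, this immediately yields $\Sig_1^1$-hardness of bisimilarity on $\Tree$, hence non-Borelness.

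First I would invoke Corollary~\ref{c:reduction-cont} to get that $R \mapsto (T_R, T_{R+R})$ is continuous from $LO$ to $\Tree\times\Tree$. Composing with the continuous embedding of $\NWO$ as a subspace of $LO$ poses no problem. It then remains to verify that for $R \in LO$,
\[
R \in \NWO \quad\Longleftrightarrow\quad T_R,\vacia \sim T_{R+R},\vacia .
\]
The forward direction is exactly Corollary~\ref{c:NWO-R+R}: if $R$ is a non-wellorder, then $\mathbf{N} = (\N,R)$ decomposes as $\mathbf{A}+\mathbf{B}$ with $\mathbf{A}$ the maximal well-ordered initial segment and $\mathbf{B}$ without first element, and the chain of bisimilarities established there shows $T_R,\vacia \sim T_{R+R},\vacia$. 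For the converse (contrapositive), if $R \in WO$ then $R+R \in WO$ as well (ordered sum of wellorders is a wellorder), and $(\N,R) \not\iso (\N,R+R)$ since they have distinct order types; Corollary~\ref{c:bisim-separ-WO} then gives $T_R,\vacia \nsim T_{R+R},\vacia$. This establishes that the preimage of bisimilarity under this map is exactly $\NWO$.

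Strictly speaking, to fit the definition of Wadge reduction I should observe that bisimilarity, viewed as a subset of $\Tree\times\Tree$, is itself a subset of a zero-dimensional Polish space ($\Tree\times\Tree$ is a closed subspace of $2^\Suc \times 2^\Suc$, hence zero-dimensional Polish), so the notion applies. The conclusion ``not Borel'' follows because every $\Sig_1^1$-hard set that were Borel would make every analytic set Borel (taking $A = \NWO$, which is not Borel), a contradiction; equivalently, Borel sets are closed under continuous preimages, so a Borel bisimilarity relation would force $\NWO = f^{-1}[{\sim}]$ to be Borel.

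I do not expect any real obstacle here: all the substantive work—the continuity of the reduction map (Lemmas~\ref{l:TR-cont} and~\ref{l:sum-continuous}, Corollary~\ref{c:reduction-cont}) and the two directions of the biconditional (Corollaries~\ref{c:bisim-separ-WO} and~\ref{c:NWO-R+R})—has already been carried out in the preceding lemmas. The only mild point of care is the bookkeeping around which ambient space plays the role of ``$X$'' in the definition of $\Sig_1^1$-hard and confirming $\Tree \times \Tree$ qualifies; this is routine. So the proof is essentially an assembly of the machinery built so far, and the theorem should follow in a few lines.
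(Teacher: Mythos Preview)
Your proposal is correct and follows essentially the same approach as the paper: you use the continuous map $R\mapsto (T_R,T_{R+R})$ (Corollary~\ref{c:reduction-cont}), invoke Corollaries~\ref{c:bisim-separ-WO} and~\ref{c:NWO-R+R} to establish $f^{-1}[{\sim}]=\NWO$, and conclude $\Sig_1^1$-hardness via the Lusin--Sierpi\'nski theorem. The extra bookkeeping you supply about $\Tree\times\Tree$ being zero-dimensional Polish is a harmless elaboration the paper leaves implicit.
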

\begin{proof}
We have seen that $R\mapsto f(R) \doteq (T_R,T_{R+R})$  is continuous
by Corollary~\ref{c:reduction-cont}. 
By Corollary~\ref{c:bisim-separ-WO} and Corollary~\ref{c:NWO-R+R},
$f^{-1}[{\sim}] = \NWO$ and hence $\NWO$ is Wadge reducible to $\sim$.
This makes $\sim$ $\Sig_1^1$-hard, and not Borel (since Borel sets
are preserved by continuous preimages).
\end{proof}
\subsection{Bisimilarity on MLTS is not Borel}
In this section we will obtain our main result. For this, we will
construct a MLTS $\nlmp F$ for which the relation of bisimilarity is
not Borel. It is easy to show  that for any countable measurable
logic and for any MLTS $\nlmp S$, the induced relation of logical equivalence
on $\nlmp S$ is Borel, so the punchline is that there is no such a
logic for any class of MLTS containing $\nlmp F$.

The state space of $\nlmp F$ will be the result of collecting  all  the tree
processes $T_R$ (with an appropriate ``tagging''), in such a way that
bisimilarity on $\nlmp F$ is exactly the relation of bisimilarity on
$\Tree$. The only details to be taken care of are essentially the
measurability requirements to be a MLTS.

Let $L\doteq\{l\}$ be the singleton label set. Define $\nlmp
F=(F,\B(F), {\bpred})$ (where we are taking $\Tfont{\tilde T}_l = {\bpred}$) such that
\begin{itemize}
\item $(F,\B(F)) \doteq (\Tree\times\Suc,\B(\Tree\times\Suc))$
  (where $\Suc$ is considered discrete). Note that this is a Polish space.
\item $\mathop{\bpred}(T,s) \doteq \{(T,s')$ : $s,s'\in T \et s\pred s'\}$.
\end{itemize}
We prove that $\nlmp F$ is a MLTS. First note that sets
$\mathop{\bpred}(T,s)$, being countable, are  Borel. We will also use 
the symbol $\bpred$ also as a binary relation, defined in the obvious
way: $(T,s) \bpred (T',s')$ iff $T=T'$, $s,s'\in T$ and $s\pred s'$.


  
 For a subset $A\subseteq X\times Y$ of a product and $c\in Y$ the
 \emph{section} $A|_c$ is the set $\{x\in X : (x,c)\in A\}$, the
 preimage of the injection $x\mapsto (x,c)$. 
\begin{lemma}
   $\pre{l}{Q}$ is Borel for each $Q\in\B(\Tree\times\Suc)$, and hence
  $\nlmp F$ is a MLTS.
\end{lemma}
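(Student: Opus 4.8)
The plan is to show that the predecessor map $\pre{l}{\cdot}$ sends Borel sets of $F = \Tree\times\Suc$ to Borel sets, exploiting the very simple ``fibered'' nature of the transition structure: a transition $(T,s)\bpred(T',s')$ can only happen when $T=T'$, so the relation $\bpred$ lives entirely inside the fibers $\Tree\times\{s\}$ over the \emph{discrete} space $\Suc$. First I would record the pointwise description
\[
\pre{l}{Q} = \{(T,s) : \exists s'\in\Suc,\ s\pred s',\ s,s'\in T,\ (T,s')\in Q\},
\]
and note that since $s\pred s'$ forces $s' = s\ap e$ for some $e\in\N$, the existential quantifier ranges over a \emph{countable} set: $\pre{l}{Q} = \bigcup_{e\in\N}\{(T,s) : s\ap e\in T,\ s\in T,\ (T,s\ap e)\in Q\}$. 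A countable union of Borel sets is Borel, so it suffices to fix $e$ and show each set in the union is Borel.

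Next I would dispatch the three conjuncts defining the $e$-th set. The condition ``$s\in T$'' defines a Borel (indeed clopen, by the subbasis in~(\ref{eq:4})) subset of $\Tree\times\Suc$; likewise ``$s\ap e\in T$'' is clopen, since $s\mapsto s\ap e$ is continuous on the discrete space $\Suc$ and membership ``$t\in T$'' is clopen in $\Tree$ for fixed $t$. For the third conjunct, consider the map $g_e:\Tree\times\Suc\to\Tree\times\Suc$ given by $g_e(T,s)\doteq(T,s\ap e)$; this is continuous because $\Suc$ is discrete and $s\mapsto s\ap e$ is a (trivially continuous) self-map of it. Then $\{(T,s) : (T,s\ap e)\in Q\} = g_e^{-1}[Q]$, which is Borel since Borel sets are preserved under continuous preimages. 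Intersecting the three Borel conjuncts and taking the union over $e\in\N$ yields that $\pre{l}{Q}$ is Borel. Finally, applying this with $Q$ ranging over $\B(\Tree\times\Suc)$ shows $\B(F)$ is stable under $\pre{l}$, which is precisely the measurability requirement in Definition~\ref{d:mlts}; combined with the already-noted fact that each set $\mathop{\bpred}(T,s)$ is Borel (being countable), this establishes that $\nlmp F$ is an MLTS.

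There is no real obstacle here; the only point requiring a little care is making the ``$\exists s'$'' genuinely countable rather than treating it as an arbitrary projection (which would only give an analytic set, not a Borel one). This is handled by the observation that $s\pred s'$ pins down $s'$ to be one of the countably many immediate successors $s\ap e$ of $s$, so the quantifier collapses to a countable union and no projection of an uncountable family is needed. Everything else is bookkeeping with the clopen subbasis of $\Tree$ and $2^\Suc$ described around equation~(\ref{eq:4}).
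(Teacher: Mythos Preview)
Your proof is correct and follows essentially the same approach as the paper: both expand the definition of $\pre{l}{Q}$, collapse the existential over successors into a countable union (over $n\in\N$), and then verify each summand is Borel. The only cosmetic difference is that the paper additionally splits over $s\in\Suc$ and uses sections $Q|_{s\ap n}$ to identify each piece as a Borel rectangle, whereas you keep $s$ free and invoke the continuous map $g_e(T,s)=(T,s\ap e)$ to pull back $Q$; your version is slightly more streamlined, and your conjunct ``$s\in T$'' is redundant (it follows from $s\ap e\in T$ since $T$ is a tree) but harmless.
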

\begin{proof}
  \begin{align*}
    \pre{l}{Q} & = \{(T,s)\in F : \exists (T',s')\in Q\ ((T,s)\bpred
    (T',s') )\} \\
    & =  \{(T,s)\in F : \exists s'\ ((T,s')\in Q \et (T,s)\bpred
    (T,s') )\} \\
    & =  \{(T,s)\in F : \exists n\in\N(s\ap n \in T \et (T,s\ap n)
    \in Q) \} \\
    & =  \bigcup_{n\in\N}\bigcup_{s\in\Suc} \{(T,s)\in F :s\ap n
    \in T \et (T,s\ap n )\in Q \}
  \end{align*}
  Now we may write the set inside the unions (now for fixed $s,n$) as
  \begin{align*}
    \{(T,s)\in F :s\ap n     \in T \et (T,s\ap n )\in Q \}  &= 
  \Bigl( Q  \cap \{(T,s\ap n) :s\ap n \in T\}\Bigr)|_{s\ap n} \times
  \{s\} \\
  & =  \Bigl(Q  \cap \bigl(\{T :s\ap n \in T\} \times \{s\ap n\}\bigl)
  \Bigr)|_{s\ap n} \times  \{s\}
  \end{align*}
  The inner rectangle is clopen, and since $Q$ is Borel, the set between
  the big parentheses is Borel. The whole set is easily Borel, too.
\end{proof}
%
%
\begin{theorem}\label{t:bisim-not-borel}
  The relation of bisimilarity  is a $\Sig_1^1$-hard subset of $F\times
  F$, and hence not Borel. 
\end{theorem}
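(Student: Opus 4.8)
The plan is to reduce bisimilarity on $\Tree$ to bisimilarity on $F\times F$ via the tagged embedding of trees into $\nlmp F$, and then invoke the previous theorem that bisimilarity on $\Tree$ is $\Sig_1^1$-hard. First I would define, for each $T\in\Tree$, an embedding map $e_T: T \to F$ by $e_T(s) = (T,s)$, and observe that $e_T$ is an isomorphism of LTS between $(T,\pred)$ (the tree process with root $\vacia$) and the sub-LTS of $\nlmp F$ reachable from $(T,\vacia)$; this is immediate from the definition $\mathop{\bpred}(T,s) = \{(T,s') : s,s'\in T \et s\pred s'\}$, which simply re-tags the successor structure of $T$ with the fixed first coordinate $T$. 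In particular, the states $(T,\vacia)$ and $(T',\vacia)$ in $\nlmp F$ are bisimilar (in the sense of standard LTS bisimilarity, which by Proposition~\ref{pr:all-bisim-the-same} coincides with all the other notions here since $\nlmp F$ is image-countable) if and only if the trees $T$ and $T'$ are bisimilar as processes rooted at $\vacia$.

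The key step is to package this as a continuous reduction. I would define $g:\Tree \to F$ by $g(T) \doteq (T,\vacia)$; this is trivially continuous, being (essentially) a section of the projection and a restriction of the clopen injection $T\mapsto(T,\vacia)$. Then the composite map $R \mapsto \bigl(g(T_R), g(T_{R+R})\bigr) = \bigl((T_R,\vacia),(T_{R+R},\vacia)\bigr)$ from $LO$ to $F\times F$ is continuous, since $R\mapsto(T_R,T_{R+R})$ is continuous by Corollary~\ref{c:reduction-cont} and $g\times g$ is continuous. By the isomorphism observation above together with Corollary~\ref{c:bisim-separ-WO} and Corollary~\ref{c:NWO-R+R}, the preimage of the bisimilarity relation ${\sim}\subseteq F\times F$ under this map is exactly $\NWO$. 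Hence $\NWO$ is Wadge reducible to bisimilarity on $F\times F$, which makes the latter $\Sig_1^1$-hard, and therefore not Borel (Borel sets being closed under continuous preimages). Alternatively, one could phrase it even more directly: the previous theorem already exhibits $\NWO$ reducing to ${\sim}$ on $\Tree$ via $R\mapsto(T_R,T_{R+R})$, and then composing with $g\times g:\Tree\times\Tree\to F\times F$, which is continuous and satisfies $(g\times g)^{-1}[{\sim_F}] = {\sim_{\Tree}}$, yields the result.

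The main obstacle — though it is more a matter of care than of difficulty — is justifying that bisimilarity \emph{within} $\nlmp F$ between the roots $(T,\vacia)$ and $(T',\vacia)$ agrees with bisimilarity of the abstract tree processes $T_R, T_{R'}$. One has to check two directions: a bisimulation between the trees lifts to a bisimulation in $\nlmp F$ (re-tag every pair $(s,s')$ to $((T,s),(T',s'))$), and conversely any bisimulation in $\nlmp F$, restricted to the reachable parts from the two roots, descends to a bisimulation of trees — here one uses that a transition from $(T,s)$ in $\nlmp F$ never changes the first coordinate, so the reachable part from $(T,\vacia)$ is confined to $\{T\}\times T$ and is LTS-isomorphic to $(T,\pred)$. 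Once this bookkeeping is in place, nothing else is needed, and the theorem follows. I would also remark, as the paper evidently intends, that this immediately yields the advertised corollary: since logical equivalence induced by any countable measurable logic on a MLTS is Borel (the equivalence relation is an intersection of countably many Borel-rectangle-complements built from the validity sets), no such logic can characterize bisimilarity on any class of MLTS containing $\nlmp F$.
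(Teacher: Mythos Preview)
Your proposal is correct and follows essentially the same route as the paper's own proof: reduce $\NWO$ via the continuous map $R\mapsto\bigl((T_R,\vacia),(T_{R+R},\vacia)\bigr)$, invoke Proposition~\ref{pr:all-bisim-the-same} to work with standard bisimilarity, and use the correspondence between bisimilarity of $(T,\vacia),(T',\vacia)$ in $\nlmp F$ and bisimilarity of the tree processes. The only difference is that you spell out more carefully why that correspondence holds (via the tagged embedding $e_T$ and the observation that transitions never change the first coordinate), whereas the paper simply calls this ``immediate''; your extra bookkeeping is correct and harmless.
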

\begin{proof}
We will reduce again $\mathit{NWO}$ to bisimilarity. By
Proposition~\ref{pr:all-bisim-the-same} we can consider standard 
bisimilarity, since $\nlmp F$ is image-countable. It is immediate that
states $(T,\vacia)$ and $(T',\vacia)$ are bisimilar in $\nlmp F$ if
and only if there is a bisimulation between the tree processes $T$ and $T'$.
Since the injection $T\mapsto (T,\vacia)$ is continuous
from $\Tree$ to $\Tree\times\Suc$, the composition $R\mapsto
f(R) \doteq ((T_R,\vacia),(T_{R+R},\vacia))$ also is (by
Lemmas~\ref{l:TR-cont} and~\ref{l:sum-continuous}). This $f$ is a
suitable reduction, since $f^{-1}[{\sim}] = \NWO$.
\end{proof}
%

We arrive at the main result of this work.
\begin{theorem}
There is no countable logic $\calL$ that characterizes bisimulation on
$\nlmp F$ such that  $\sem{\calL}\subseteq \B(F)$.
\end{theorem}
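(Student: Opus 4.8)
The plan is to derive this as an immediate consequence of Theorem~\ref{t:bisim-not-borel} together with the elementary fact, already announced in the text, that logical equivalence induced by a countable measurable logic is always Borel. First I would make precise what ``$\calL$ characterizes bisimulation on $\nlmp F$'' means: it means that the relation $\rel{\sem\calL}$ of logical equivalence on $F$ (i.e.\ the set of pairs $(x,y)\in F\times F$ satisfying exactly the same $\calL$-formulas) coincides with ${\sim}$ on $\nlmp F$. So it suffices to show that for any countable $\calL$ with $\sem\calL\subseteq\B(F)$, the relation $\rel{\sem\calL}$ is a Borel subset of $F\times F$; since Theorem~\ref{t:bisim-not-borel} says ${\sim}$ is not Borel, no such $\calL$ can characterize it.

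The key step is thus to check that $\rel{\sem\calL}$ is Borel. Unwinding the definition of $\rel{\,\cdot\,}$ from Section~2, we have
\[
\rel{\sem\calL} \;=\; \bigcap_{\phi\in\calL}\Bigl(\bigl(\sem{\phi}\times\sem{\phi}\bigr)\cup\bigl((F\setminus\sem{\phi})\times(F\setminus\sem{\phi})\bigr)\Bigr).
\]
For each fixed $\phi$, the set $\sem{\phi}$ is Borel in $F$ by hypothesis, hence $\sem{\phi}\times\sem{\phi}$ and $(F\setminus\sem{\phi})\times(F\setminus\sem{\phi})$ are Borel in $F\times F$, and so is their union. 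The intersection is over the countable set $\calL$, and Borel sets are closed under countable intersections; therefore $\rel{\sem\calL}$ is Borel. This is the whole argument; there is no real obstacle, as every ingredient is routine.

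Putting the pieces together: suppose toward a contradiction that some countable $\calL$ with $\sem\calL\subseteq\B(F)$ characterizes bisimulation on $\nlmp F$, i.e.\ $\rel{\sem\calL}={\sim}$. By the previous paragraph $\rel{\sem\calL}$ is Borel, hence $\sim$ is Borel, contradicting Theorem~\ref{t:bisim-not-borel}. The only point that deserves a word of care — and the closest thing to a subtlety — is the reading of ``characterizes'': one should note that $\nlmp F$ is image-countable, so by Proposition~\ref{pr:all-bisim-the-same} all the flavours of bisimilarity on $\nlmp F$ coincide, and hence it is irrelevant which notion of bisimulation the hypothetical $\calL$ is meant to characterize; in every case the target relation is the non-Borel set of Theorem~\ref{t:bisim-not-borel}. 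I would state the proof in two or three sentences exactly along these lines.
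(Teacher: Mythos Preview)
Your proposal is correct and matches the paper's proof essentially verbatim: the paper assumes ${\rel{\sem\calL}}={\sim}$, writes ${\sim}$ as the countable intersection $\bigcap_{\phi\in\calL}\bigl((\sem{\phi}\times\sem{\phi})\cup((F\setminus\sem{\phi})\times(F\setminus\sem{\phi}))\bigr)$, and observes this contradicts Theorem~\ref{t:bisim-not-borel}. Your additional remark invoking Proposition~\ref{pr:all-bisim-the-same} is a harmless elaboration not present in the paper's terser version.
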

\begin{proof}
  Assume ${\rel{\sem\calL}}={\sim}$. Then 
  \begin{equation*}\label{eq:bisim-logic}
    s\sim t \iff  (s,t)\in\bigcap \Bigl\{(\sem{\phi}\times  \sem{\phi}) \cup
    \bigl((F\setminus\sem{\phi})\times (F\setminus \sem{\phi})\bigr) : \phi\in \calL\Bigr\}
  \end{equation*}
This contradicts Theorem~\ref{t:bisim-not-borel}, since the right-hand
side is a Borel definition of $\sim$.
\end{proof}
\subsection{Bisimilarity is $\Sig_1^1$-complete}

We finally show in this section that bisimilarity on $\nlmp F$ behaves similarly to
the isomorphism relation on countable structures: it is an analytic
equivalence relation with 
Borel classes. Since we already proved it to be $\Sig_1^1$-hard, we
would have seen it is a complete analytic set.

We will need a technical tool that allows us to obtain a
\emph{canonical representative} of the bisimilarity type of a tree
(see \cite[p.~275]{Blackburn:2006:HML:1207696}, Corollary 47 and the
paragraphs before). That is, for each tree $T$ we obtain a new
tree $\nlmp\Omega_{T}$ such that $T,\vacia \sim 
T',\vacia$ if and only if  $\nlmp\Omega_{T}\iso
\nlmp\Omega_{T'}$. We will essentially show that the map
$T\mapsto\nlmp\Omega_{T}$ is continuous, thereby reducing the 
relation of bisimilarity on $\Tree$ (and on $\nlmp F$) to isomorphism
of countable structures.

  An \emph{$\om$-indexed path from $s\in S$} on a LTS $\nlmp S = (S,R)$ is a sequence $u$ of the form
  \[u = s_0(s_1,a_1)(s_2,a_2)\dots(s_n,a_n)\]
  such that $s_0= s$, $a_i\in\N$ for all $i$, and $(s_{i-1},s_i)\in R$ for
  $i=1,\dots,n$.   
  The \emph{$\om$-expansion at $s$} of a LTS $\nlmp S$ is the LTS
  $\nlmp{\bar\Omega}_{\nlmp S}(s) = (\bar\Omega,\bar R)$ such that
  $\bar\Omega$ is the set of all $\om$-indexed paths on $\nlmp S$ from
  $s$  and the 
  relation $\bar R$ is defined by $(u,v)\in \bar R$ iff $v$ has the form
  $u(s,a)$ for some $a$ and $s$.

Since we are dealing with trees on $\N$, the latter construction provides  us
with  another tree that it is easily seen to be isomorphic to  the one given by the
following alternative description.
%
\begin{definition}
  The \emph{$\om$-expansion of  $ (T,\pred)$ at $s$}  is the LTS
  $\nlmp\Omega_{ T}(s)= (\Omega_{ T}(s),\bar R_{ T}(s))$ such that
  $\Omega_{ T}(s)= \{(t,n) : s\subseteq t \in T \et n \in \om\}$ and the
  relation $\bar R_{ T}(s)\subseteq (T\times\N)^2$ is given by
  \[(u,n) \mathrel{\bar R_{ T}(s)} (t,m) \iff u\pred t.\]
\end{definition}
The importance of this construction lies in the fact that two states in
a countable tree are bisimilar if and only 
if they have isomorphic $\om$-expansions. Note that the relation $\bar
R_{ T}(s)$ can be defined uniformly for all ${T}\in \Tree$ and
all $s\in T$. For this we take care of the requirement ``$\bar
R_{ T}(s)\subseteq (T\times\N)^2$'' by writing
\[(u,n) \mathrel{\bar R_{ T}(s)} (t,m) \iff s\subseteq u \et
u\pred t \et t \in T,\] 
since the right hand side implies $s,u\in T$.
Hence we are considering the function $\bar R:\Tree\times\Suc\to
2^{(\Suc\times\N)^2}$ defined by $\bar R(T,s) \doteq \bar
R_{ T}(s)$. Note that  $\bar R(T,s) = \emptyset$ if $s\notin T$.

  
\begin{lemma}\label{l:classes_onF_Borel}
  Bisimilarity classes on $\nlmp F$ are Borel.
\end{lemma}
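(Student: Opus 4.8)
The plan is to show that each bisimilarity class in $\nlmp F$ is a Borel subset of $F = \Tree \times \Suc$ by exhibiting it as a continuous preimage of the isomorphism class of a fixed countable structure (namely the $\om$-expansion), and invoking the fact that isomorphism classes of countable structures are Borel. Concretely, fix a state $(T_0, s_0) \in F$. If $s_0 \notin T_0$ the class is easily handled separately (it is the set of $(T,s)$ with $s \notin T$, which is Borel since $\{(T,s) : s \in T\}$ is Borel — it is a countable intersection of clopen conditions), so assume $s_0 \in T_0$. By the characterization via $\om$-expansions, $(T,s) \sim (T_0,s_0)$ in $\nlmp F$ iff the trees $T, s$ and $T_0, s_0$ are bisimilar iff $\nlmp\Omega_T(s) \iso \nlmp\Omega_{T_0}(s_0)$. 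So the class equals the preimage, under the map $(T,s) \mapsto \bar R(T,s)$ from $\Tree\times\Suc$ to $2^{(\Suc\times\N)^2}$, of the isomorphism class of the fixed structure $(\Suc\times\N, \bar R_{T_0}(s_0))$ inside $2^{(\Suc\times\N)^2}$ — after identifying the underlying set of every $\om$-expansion with the fixed countable set $\Suc \times \N$ (using the uniform definition of $\bar R$ given just before the lemma, where $\bar R(T,s) = \emptyset$ when $s \notin T$).

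The first key step is to verify that $(T,s) \mapsto \bar R(T,s)$ is continuous from $\Tree\times\Suc$ (with $\Suc$ discrete) to $2^{(\Suc\times\N)^2}$. This is the same style of argument as Lemma~\ref{l:TR-cont}: it suffices to check that the preimage of each subbasic set $\{X : z \in X\}$ and $\{X : z \notin X\}$, for $z = ((u,n),(t,m)) \in (\Suc\times\N)^2$, is open. By the uniform definition, $((u,n),(t,m)) \in \bar R(T,s)$ iff $s\subseteq u \et u \pred t \et t\in T$; the conditions $s \subseteq u$ and $u \pred t$ depend only on $s$ (a discrete coordinate) and on the fixed sequences $u,t$, so they cut out a clopen set in the $\Suc$-factor, while $t \in T$ is a clopen condition on $T \in \Tree$. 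Hence the positive preimage is clopen, and the negative one is its complement. So the map is continuous (in fact the class could be taken to be any continuity-compatible coding).

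The second key step is to record that the isomorphism class of a fixed countable structure is Borel in the relevant coding space $2^{(\Suc\times\N)^2} \iso 2^{\N\times\N}$. This is classical (see \cite[Sect.~16.C]{Kechris}): $\{R : (\N,R) \iso (\N,R_0)\}$ is analytic, being the projection of the isomorphism relation; but more to the point, an isomorphism class of a countable structure is always Borel — it is $\Sig^0_{\alpha}$ for some countable $\alpha$ by Scott's theorem, or one can simply note it is a single orbit of the (Polish) group $S_\infty$ acting continuously, and orbits of Polish group actions that are $F_\sigma$-generated in this way are Borel; in any case the statement ``the isomorphism class of a fixed countable structure is Borel'' is a standard fact we may cite. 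Pulling this Borel set back along the continuous map of the previous paragraph, and taking the union with (or intersection with the complement of) the Borel set $\{(T,s) : s \in T\}$ as appropriate for whether $s_0 \in T_0$, yields that the bisimilarity class of $(T_0,s_0)$ is Borel.

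The main obstacle I expect is purely bookkeeping: making precise the identification of the underlying sets of all the $\om$-expansions $\nlmp\Omega_T(s)$ with one fixed countable set so that ``isomorphic $\om$-expansions'' literally becomes ``$\bar R(T,s)$ lies in a fixed isomorphism class in $2^{(\Suc\times\N)^2}$''. The excerpt has already done the essential work here by giving the uniform definition $(u,n)\mathrel{\bar R_T(s)}(t,m) \iff s\subseteq u \et u\pred t \et t\in T$ on the common ambient set $\Suc\times\N$, with $\bar R(T,s)=\emptyset$ off the class, so once one checks that two $\om$-expansions (as structures on subsets of $\Suc\times\N$) are isomorphic iff their codes $\bar R(T,s)$ and $\bar R(T',s')$ are isomorphic as structures on all of $\Suc\times\N$ — which holds because $\bar R(T,s)$ only relates pairs whose first coordinates lie in $T$ above $s$, so the "junk" vertices outside the genuine expansion are all isolated and contribute nothing to the isomorphism type beyond a fixed countably-infinite set of isolated points — the reduction to the classical fact goes through. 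Everything else is routine continuity checking as in Lemma~\ref{l:TR-cont}.
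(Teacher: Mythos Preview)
Your approach coincides with the paper's: pull back isomorphism classes of countable structures (Borel by Scott's theorem) along the map $\bar R$, and verify that $\bar R$ is continuous by the same subbasic-preimage computation you outline. You are in fact more explicit than the paper about the passage from ``$\om$-expansions isomorphic on their own universes'' to ``codes $\bar R(T,s)$ isomorphic as structures on the fixed ambient set $\Suc\times\N$''. Two small bookkeeping slips in that discussion: the bisimilarity class of a state with $s_0\notin T_0$ is not merely $\{(T,s):s\notin T\}$ but the class of all dead states, which also contains every leaf $(T,s)$ with $s\in T$; and your claim that the junk vertices always form a countably-infinite set fails at the single point $(\Suc,\vacia)$, where $\Omega_T(s)=\Suc\times\N$ and there is no padding at all, so that one class needs a separate (easy) check. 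Both wrinkles are glossed over in the paper's proof as well.
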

\begin{proof}
  By the previous observations we conclude that for each $(T,s)\in F$,
  the bisimilarity class
  $[(T,s)]_{\sim}$ is mapped by $\bar R$  into the isomorphism class
  $[\bar R(T,s)]_{\iso}$. Then $[(T,s)]_{\sim} =
  \bar R^{-1}([\bar R(T,s)]_{\iso})$. By Scott's Theorem \cite{Scott64}, we
  know that isomorphism classes of countable (binary) structures are
  Borel, hence $[\bar R(T,s)]_{\iso}$ 
  is a Borel subset of $2^{(\Suc\times\N)^2}$. Then we just have to show
  that the map 
  %
  %
  $\bar R$
  is Borel measurable. We'll actually see that it is continuous.
  

  %
  It is enough to show that preimages of subbasic sets are open. Take
  $((u,n),(t,m))\in (\Suc\times\N)^2$; we have two cases. If  $u\pred t$,
  \begin{align*}
    \bar R^{-1}[\{R: ((u,n),(t,m))\in R\}]&=\{(T,s): t\in T \et s\subseteq
    u\} \\
    &= \{T:t\in T\}\times\{s: s\subseteq  u\},
  \end{align*}
  and it is empty otherwise. The last set is an open
  rectangle. 

  Now we go for the other type of subbasic open sets. If $u\npred t$,
  \[\bar R^{-1}[\{R: ((u,n),(t,m))\notin R\}] = \Tree\times\Suc.\]
  Otherwise, 
  \begin{align*}
    \bar R^{-1}[\{R: ((u,n),(t,m))\notin R\}]&=\{(T,s): t\notin T
    \text{ or } s\nsubseteq
    u\} \\
    &= \bigl(\{T:t\notin T\}\times\Suc\bigr)\cup \bigl(\Tree\times\{s: s\nsubseteq  u\}\bigr),
  \end{align*}
  again an open set.
\end{proof}

By the proof of the previous lemma, $\bar R$ is a reduction showing
that  bisimilarity
is $\Sig_1^1$, since isomorphism is. We also give a direct proof of
this fact, by analyzing an explicit definition of $\sim$ on $\nlmp
F$. We need an auxiliary calculation first.
\begin{lemma}\label{l:indexed_borel}
Let $A$ be countable with the discrete topology, $Y$ Polish and
$B_k\subseteq Y$  Borel for all $k\in  A$. Then
\[C(R,y) \stackrel{\sbt}{\iff} \forall  k\in R: (y\in B_k)\]
is Borel in $2^A\times Y$.
\end{lemma}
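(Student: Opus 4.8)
The plan is to show directly that the predicate $C(R,y) \stackrel{\sbt}{\iff} \forall k\in R: (y\in B_k)$ is a countable intersection of Borel sets. First I would observe that the universe of the quantifier is the \emph{fixed} countable set $A$, so that membership ``$k\in R$'' is, for each fixed $k$, a clopen condition on $R\in 2^A$ (one of the subbasic sets defining the topology on $2^A = \Pow(A)$). Thus for each fixed $k\in A$, the conditional ``$k\in R \implies y\in B_k$'' defines the set
\[
 D_k \doteq \bigl(\{R : k\notin R\}\times Y\bigr) \;\cup\; \bigl(\{R : k\in R\}\times B_k\bigr),
\]
which is a finite union of products of Borel sets, hence Borel in $2^A\times Y$.

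The key step is then just to rewrite the quantifier as an intersection: since $A$ is countable,
\[
 C = \{(R,y) : \forall k\in R\; (y\in B_k)\} \;=\; \bigcap_{k\in A} D_k,
\]
and a countable intersection of Borel sets is Borel. I would spell out the equality $C = \bigcap_k D_k$ by noting that $(R,y)\in D_k$ says exactly ``if $k\in R$ then $y\in B_k$'', and $(R,y)\in C$ iff this holds for every $k\in A$; the countability of $A$ is what makes the intersection a countable (hence admissible) one.

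I do not expect any serious obstacle here — this is a routine ``bounded quantifier over a fixed countable index set is preserved by Borel'' argument, the only point worth being careful about being that the index set $A$ is fixed and countable (not varying with $R$), so that ``$k\in R$'' is a coordinate condition on the Cantor-type space $2^A$ and hence clopen. The reason the lemma is stated is presumably that it will later be applied with $A$ of the form $\Suc\times\N$ or $(\Suc\times\N)^2$ and with $B_k$ ranging over the sections appearing in the definition of bisimilarity via the $\om$-expansion $\bar R$, so I would phrase the proof so that it applies verbatim to any such $A$; but the proof itself needs nothing beyond the two displays above.
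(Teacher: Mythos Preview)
Your proof is correct and is essentially the paper's own argument: both rewrite the bounded quantifier as $\forall k\in A\,(k\in R \Rightarrow y\in B_k)$ and express $C$ as the countable intersection $\bigcap_{k\in A} D_k$ with each $D_k$ a finite union of Borel rectangles. The only cosmetic difference is that the paper writes $D_k = (\{R:k\notin R\}\times Y)\cup(2^A\times B_k)$ while you write $(\{R:k\notin R\}\times Y)\cup(\{R:k\in R\}\times B_k)$, which is the same set.
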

\begin{proof}
We have $(R,y)\in C \iff \forall k\in A : (k\in R \ent y\in B_k)$. Then 
\begin{align*}
C =& \bigcap_{k\in A}\{(R,y) : k\in R \ent y\in B_k\} \\
 =& \bigcap_{k\in A}\{(R,y) : k\notin R\} \cup \{(R,y) :  y\in
 B_k\} \\
 =& \bigcap_{k\in A} (\{R : k\notin R\}\times Y) \cup (2^A \times 
 B_k)
\end{align*}
which is obviously Borel.
\end{proof}
%
\begin{theorem}
Bisimilarity on $\nlmp F$ is $\Sig_1^1$.
\end{theorem}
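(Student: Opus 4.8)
The plan is to write down an explicit arithmetical definition of bisimilarity on $\nlmp F$ and verify it lies in $\Sig_1^1$ by quantifying existentially over a bisimulation witness, which is itself a point in a Polish space. Since $\nlmp F$ is image-countable, by Proposition~\ref{pr:all-bisim-the-same} we may work with standard (Milner's) bisimilarity, and since $(T,\vacia)\sim(T',\vacia)$ iff there is a bisimulation between the tree processes, it suffices to analyze bisimilarity between states of the form $(T,s)$ directly in terms of the trees. A bisimulation relating $(T,s)$ and $(T',s')$ can be coded as a relation $Z\subseteq(\Suc\times\Suc)$ (or, uniformly over the whole space, as a point of $2^{(\Suc\times\Suc)}$, which is a Cantor space); the statement ``$Z$ is a bisimulation in $\nlmp F$ containing the pair $((T,s),(T',s'))$'' is then a conjunction of: $(s,s')\in Z$; and for all $(u,v)\in Z$, the usual forth condition ($\forall n\,[u\ap n\in T \ent \exists m\,(v\ap m\in T' \et (u\ap n, v\ap m)\in Z)]$) together with the symmetric back condition. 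Thus
\[
(T,s)\sim(T',s') \iff \exists Z\in 2^{(\Suc\times\Suc)}\ \bigl[(s,s')\in Z \et \forall (u,v)\in Z\ (\text{forth and back hold})\bigr].
\]

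The key steps, in order, are: first, fix the coding of the witness $Z$ as a point of the Polish (indeed compact metrizable) space $2^{(\Suc\times\Suc)}$, and observe that projection along this existential quantifier over a Polish space preserves membership in $\Sig_1^1$ (the image of a Borel set under a continuous/Borel map into a Polish space is analytic). Second, show that the matrix inside the existential quantifier — the set $\{(T,s,T',s',Z) : Z \text{ witnesses bisimilarity of }(T,s),(T',s')\}$ — is Borel in $\Tree\times\Suc\times\Tree\times\Suc\times 2^{(\Suc\times\Suc)}$. The conjunct $(s,s')\in Z$ is clopen. The forth/back conjunct is a countable intersection over $(u,v)\in\Suc\times\Suc$ of sets of the form ``$(u,v)\notin Z$, or [for all $n$: $u\ap n\notin T$, or there is $m$ with $v\ap m\in T' \et (u\ap n,v\ap m)\in Z$]''; here ``$(u,v)\in Z$'', ``$u\ap n\in T$'', ``$v\ap m\in T'$'', ``$(u\ap n,v\ap m)\in Z$'' are all clopen conditions, so each such set is obtained from clopen sets by countable unions and intersections (the inner $\forall n$ is a countable intersection, the $\exists m$ a countable union), hence Borel; this is exactly the kind of computation carried out in Lemma~\ref{l:indexed_borel} and can be invoked in that form. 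Third, combine: $\sim$ is the projection of a Borel set along a Polish factor, hence $\sim\in\Sig_1^1(F\times F)$.

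The main obstacle — though a mild one — is bookkeeping: making sure the bisimulation witness is coded in a fixed ambient Polish space that does not depend on $T,T'$ (so that we have a genuine single existential quantifier over a Polish space, not a family of them), and checking carefully that the nested $\forall n$ / $\exists m$ alternations inside the definition of the matrix keep us within the Borel sets rather than climbing the projective hierarchy. The point is that $n,m$ range over the \emph{countable} set $\N$, so these are countable intersections and unions of Borel (in fact clopen) sets, which is why the matrix stays Borel; the only genuine existential-over-uncountable quantifier is the outermost one over $Z$, and that is precisely what produces a $\Sig_1^1$ set. Once this is in place, the proof is a one-line assembly, optionally cross-referenced with Lemma~\ref{l:indexed_borel} for the Borelness of the clauses involving $Z$. (Alternatively, one can finish immediately by the remark following Lemma~\ref{l:classes_onF_Borel}: $\bar R$ continuously reduces $\sim$ to isomorphism of countable structures, and isomorphism is $\Sig_1^1$; the explicit-definition route above is the promised ``direct proof''.)
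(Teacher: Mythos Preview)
Your proposal is correct and follows essentially the same approach as the paper: code the bisimulation witness as a point of $2^{\Suc\times\Suc}$, verify that the matrix inside the existential quantifier is Borel by unwinding the countable $\forall n/\exists m$ alternations over clopen atoms (invoking Lemma~\ref{l:indexed_borel} for the outer $\forall(u,v)\in Z$), and conclude that bisimilarity is the projection of a Borel set along a Polish factor. You also correctly note the alternative route via the continuous reduction $\bar R$ to isomorphism, which the paper mentions as well.
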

\begin{proof}
As usual, $n,m$ denote  non negative integers and $s_i$ finite
sequences. The definition of bisimilarity on $\nlmp F$ is as follows:
\begin{multline*}
  (T_1,s) \sim   (T_2,s') \iff   \exists R\in
  2^{\Suc\times\Suc} : (s,s')\in R \et\\
 \et   \forall s_1 \forall s_2 \forall n . \Bigl(s_1\ap n \in T_1 \et s_2 \in T_2
  \et (s_1,s_2)\in R \implies \\
  \exists m : s_2\ap m \in T_2 \et (s_1\ap n, s_2\ap m) \in R \Bigr)
  \et \\
   \et \forall s_1 \forall s_2 \forall n . \Bigl(s_1\in T_1 \et s_2\ap n \in T_2
  \et (s_1,s_2)\in R \implies \\
  \exists m : s_1\ap m \in T_1 \et (s_1\ap m, s_2\ap n) \in R \Bigr).
\end{multline*}
It suffices to prove that the set defined inside the outer existential
quantifier is Borel in $2^{\Suc\times\Suc}\times F\times F$. We first
consider the third line of the definition. The set defined by
\[(R,(T_1,s),(T_2,s')) \in X_{(s_1,s_2),n,m} \mathrel{\stackrel{\sbt}{\iff}}  s_2\ap m \in T_2 \et (s_1\ap n, s_2\ap
m) \in R\]
is easily Borel. Then  the condition $\exists m : s_2\ap m \in T_2 \et
(s_1\ap n, s_2\ap m) \in R$ also is and
\[ \forall n . \Bigl(s_1\ap n \in T_1 \et s_2 \in T_2
  \implies
  \exists m : s_2\ap m \in T_2 \et (s_1\ap n, s_2\ap m) \in R \Bigr)\]
finally defines a Borel set of tuples $(R,(T_1,s),(T_2,s'))$
indexed by elements $(s_1,s_2)\in R$. We
may apply now Lemma~\ref{l:indexed_borel} and conclude that
\[\forall (s_1,s_2)\in R: \forall n . \Bigl(s_1\ap n \in T_1 \et s_2 \in T_2
  \implies
  \exists m : s_2\ap m \in T_2 \et (s_1\ap n, s_2\ap m) \in R \Bigr),\]
which is equivalent to the second and third lines of our definition
for bisimilarity, is Borel.

The rest of the formula is handled similarly.
\end{proof}
By using Theorem~\ref{t:bisim-not-borel} we conclude
\begin{corollary}
Bisimilarity on $\nlmp F$ is $\Sig_1^1$-complete.
\end{corollary}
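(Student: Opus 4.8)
The final statement to prove is the corollary that bisimilarity on $\nlmp F$ is $\Sig_1^1$-complete.

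The plan is to combine the two results that have already been established about bisimilarity on $\nlmp F$: by Theorem~\ref{t:bisim-not-borel} the relation $\sim$ on $F\times F$ is $\Sig_1^1$-hard, and by the immediately preceding theorem $\sim$ is itself an element of $\Sig_1^1(F\times F)$. Recalling the definition of $\Sig_1^1$-completeness given earlier in the paper, a set $B$ is $\Sig_1^1$-complete precisely when it is $\Sig_1^1$-hard \emph{and} belongs to $\Sig_1^1$. So the proof is simply to invoke these two facts in conjunction.

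First I would note that $F\times F$ is a Polish space (being the square of the Polish space $F = \Tree\times\Suc$), so that it makes sense to speak of $\Sig_1^1(F\times F)$ and to apply the notion of $\Sig_1^1$-completeness to subsets of it; and that $F$, being built from $\Tree$ which is closed in $2^\Suc$, is zero-dimensional, which is the hypothesis needed for the hardness notion. Then I would cite Theorem~\ref{t:bisim-not-borel} for hardness and the preceding theorem for membership in $\Sig_1^1$. Combining these two gives $\Sig_1^1$-completeness directly from the definition.

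I expect no real obstacle here: this corollary is a one-line consequence of the two theorems it follows, and the only thing worth spelling out is that "hard" plus "in the class" equals "complete" by definition. I would therefore keep the proof very short.

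\begin{proof}
Immediate from Theorem~\ref{t:bisim-not-borel}, which shows that $\sim$ is a $\Sig_1^1$-hard subset of the (zero-dimensional) Polish space $F\times F$, and from the preceding theorem, which shows $\sim \in \Sig_1^1(F\times F)$. By the definition of $\Sig_1^1$-completeness, these two facts together give that $\sim$ is $\Sig_1^1$-complete.
\end{proof}
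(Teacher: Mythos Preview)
Your proof is correct and matches the paper's approach exactly: the corollary is obtained immediately by combining Theorem~\ref{t:bisim-not-borel} (hardness) with the preceding theorem ($\Sig_1^1$ membership). One minor remark: the zero-dimensionality of $F$ is not actually needed here, since in the paper's definition of $\Sig_1^1$-hardness only the \emph{source} spaces $X$ of the reductions are required to be zero-dimensional, not the ambient space of $B$.
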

\section{Conclusion}
Nondeterministic labelled Markov process combine probabilistic
behavior with internal nondeterminism, over uncountable state
spaces. In this framework, we
considered the problem of describing bisimilarity by using a modal
logic. 

We reviewed the different available notions of `equivalence of
behavior'. They proceed, in some way, from analogous concepts for
LMP. So the problem of \emph{logical characterization of bisimilarity}
is manifold, depending on which notion of bisimulation one is
concerned. In \cite{D'Argenio:2012:BNL:2139682.2139685} it was
established that the three concepts of bisimilarity (traditional,
state, and event) are indeed
different. The counterexamples 
were image-uncountable process, i.e., having an uncountable number of
probabilistic behaviors for each pair 
$\langle$state,action$\rangle$. For only one of these concepts (event
bisimilarity) did the logical characterization go through, in a
similar way it was done for LMP in \cite{coco}.

In the other extreme of the spectrum, the case of image-finite NLMP
was completely solved in \cite{D'Argenio:2012:BNL:2139682.2139685},
where all bisimilarities coincide and they are characterized by a neat
modal logic.

The case left was that of image-denumerable processes. In this
restricted setting, we  have that traditional and state bisimilarity
coincide and the logic characterizing the event based has uncountably
many formulas. But in all approaches to logical characterization, a
countable measurable logic is needed.

We restricted the image-denumerable case further by considering only
nonprobabilistic NLMP, and its equivalent formulation as  MLTS. With
this simplification, we are led essentially to consider plain  LTS 
and Milner's bisimilarity since this is the same as the other
notions on image-countable MLTS. We showed that for the latter family
of processes,  there is no countable measurable
logic characterizing bisimilarity. We did this in a very strong sense,
by pointing out a specific NLMP $\nlmp F$ whose base space $F$ is
Polish such that 
bisimilarity on $\nlmp F$ is an
analytic non Borel subset  of $F^2$;  therefore there is no countable measurable logic
that characterizes bisimilarity for this process, and \textit{a
  fortiori}, for any class of processes containing $\nlmp F$ (v.g. the
class of all image-infinite NLMP with a Polish state-space). As an
intermediate technical step, we proved that the relation of
bisimilarity on the Polish space of all trees on $\N$ is
analytic-complete. Then the space  $F$ is essentially this Polish space.


After obtaining these results and recalling the use of MLTS in other works
(the counterexamples in
\cite{D'Argenio:2012:BNL:2139682.2139685,Wolovick} are
non-probabilistic NLMP), we conclude that
these models provide a simple 
framework that can be considered as a first test scenario for conjectures
about nondeterministic and probabilistic processes over continuous
state spaces.
\begin{ack}
I want to thank Prof.~Xavier Caicedo for a nice discussion concerning
modal logics and for his Example~\ref{ex:caicedo}. Also C.~Areces
and M.~Campercholi pointed out several necessary references. I acknowledge the
careful reading by both of the referees, and their comments, that changed
drastically the way the results were presented; I realized that some
of the passages in my original version were truly difficult to read,
so thanks again. I would also like 
to thank Jos\'e G.~Mijares for some comments on 
descriptive-set-theoretical issues. Finally, I would like to express
my gratitude to Prof.~Pedro D'Argenio for a very instructive talk on
process algebra.
\end{ack}


\begin{thebibliography}{24}
\expandafter\ifx\csname natexlab\endcsname\relax\def\natexlab#1{#1}\fi
\providecommand{\bibinfo}[2]{#2}
\ifx\xfnm\relax \def\xfnm[#1]{\unskip,\space#1}\fi
\bibitem[{Blackburn et~al.(2006)Blackburn, Benthem and
  Wolter}]{Blackburn:2006:HML:1207696}
\textsc{\bibinfo{author}{P.~Blackburn}, \bibinfo{author}{J.v. Benthem},
  \bibinfo{author}{F.~Wolter}}, ``\bibinfo{title}{Handbook of Modal Logic}'',
  \bibinfo{series}{Studies in Logic and Practical
  Reasoning}~\textbf{\bibinfo{volume}{3}}, \bibinfo{publisher}{Elsevier Science
  Inc.}, \bibinfo{address}{New York, NY, USA} (\bibinfo{year}{2006}).
\bibitem[{Bolognesi and Brinksma(1987)}]{DBLP:journals/cn/BolognesiB87}
\textsc{\bibinfo{author}{T.~Bolognesi}, \bibinfo{author}{E.~Brinksma}},
  \bibinfo{title}{Introduction to the {ISO} specification language {LOTOS}},
  \textit{\bibinfo{journal}{Computer Networks}} \textbf{\bibinfo{volume}{14}}:
  \bibinfo{pages}{25--59} (\bibinfo{year}{1987}).
\bibitem[{Celayes(2006)}]{Celayes}
\textsc{\bibinfo{author}{P.~Celayes}}, ``\bibinfo{title}{Procesos de {M}arkov
  Etiquetados sobre Espacios de {B}orel Est\'andar}'', Master's thesis, FaMAF,
  Universidad Nacional de C\'ordoba (\bibinfo{year}{2006}).
\bibitem[{Danos et~al.(2006)Danos, Desharnais, Laviolette and
  Panangaden}]{coco}
\textsc{\bibinfo{author}{V.~Danos}, \bibinfo{author}{J.~Desharnais},
  \bibinfo{author}{F.~Laviolette}, \bibinfo{author}{P.~Panangaden}},
  \bibinfo{title}{Bisimulation and cocongruence for probabilistic systems},
  \textit{\bibinfo{journal}{Inf. Comput.}} \textbf{\bibinfo{volume}{204}}:
  \bibinfo{pages}{503--523} (\bibinfo{year}{2006}).
\bibitem[{D'Argenio et~al.(2009)D'Argenio, Wolovick, S{\'a}nchez~Terraf and
  Celayes}]{DWTC09:qest}
\textsc{\bibinfo{author}{P.~D'Argenio}, \bibinfo{author}{N.~Wolovick},
  \bibinfo{author}{P.~S{\'a}nchez~Terraf}, \bibinfo{author}{P.~Celayes}},
  \bibinfo{title}{Nondeterministic labeled {M}arkov processes: Bisimulations
  and logical characterization}, in: \bibinfo{booktitle}{QEST},
  \bibinfo{publisher}{IEEE Computer Society}: \bibinfo{pages}{11--20}
  (\bibinfo{year}{2009}).
\bibitem[{D'Argenio et~al.(2012)D'Argenio, S\'{a}nchez~Terraf and
  Wolovick}]{D'Argenio:2012:BNL:2139682.2139685}
\textsc{\bibinfo{author}{P.R. D'Argenio},
  \bibinfo{author}{P.~S\'{a}nchez~Terraf}, \bibinfo{author}{N.~Wolovick}},
  \bibinfo{title}{Bisimulations for non-deterministic labelled {M}arkov
  processes}, \textit{\bibinfo{journal}{Mathematical Structures in Comp. Sci.}}
  \textbf{\bibinfo{volume}{22}}: \bibinfo{pages}{43--68}
  (\bibinfo{year}{2012}).
\bibitem[{Desharnais(1999)}]{Desharnais}
\textsc{\bibinfo{author}{J.~Desharnais}}, ``\bibinfo{title}{Labeled {M}arkov
  Process}'', Ph.D. thesis, McGill University (\bibinfo{year}{1999}).
\bibitem[{Desharnais et~al.(2002)Desharnais, Edalat and Panangaden}]{DEP}
\textsc{\bibinfo{author}{J.~Desharnais}, \bibinfo{author}{A.~Edalat},
  \bibinfo{author}{P.~Panangaden}}, \bibinfo{title}{Bisimulation for labelled
  {M}arkov processes}, \textit{\bibinfo{journal}{Inf. Comput.}}
  \textbf{\bibinfo{volume}{179}}: \bibinfo{pages}{163--193}
  (\bibinfo{year}{2002}).
\bibitem[{Desharnais et~al.(2011)Desharnais, Laviolette and
  Turgeon}]{DBLP:journals/iandc/DesharnaisLT11}
\textsc{\bibinfo{author}{J.~Desharnais}, \bibinfo{author}{F.~Laviolette},
  \bibinfo{author}{A.~Turgeon}}, \bibinfo{title}{A logical duality for
  underspecified probabilistic systems}, \textit{\bibinfo{journal}{Inf.
  Comput.}} \textbf{\bibinfo{volume}{209}}: \bibinfo{pages}{850--871}
  (\bibinfo{year}{2011}).
\bibitem[{Doberkat(2005)}]{DBLP:journals/siamcomp/Doberkat05}
\textsc{\bibinfo{author}{E.E. Doberkat}}, \bibinfo{title}{Stochastic relations:
  Congruences, bisimulations and the hennessy--milner theorem},
  \textit{\bibinfo{journal}{SIAM J. Comput.}} \textbf{\bibinfo{volume}{35}}:
  \bibinfo{pages}{590--626} (\bibinfo{year}{2005}).
\bibitem[{Doberkat(2007{\natexlab{a}})}]{Doberkat2007638}
\textsc{\bibinfo{author}{E.E. Doberkat}}, \bibinfo{title}{{K}leisli morphisms
  and randomized congruences for the {G}iry monad},
  \textit{\bibinfo{journal}{Journal of Pure and Applied Algebra}}
  \textbf{\bibinfo{volume}{211}}: \bibinfo{pages}{638--664}
  (\bibinfo{year}{2007}).
\bibitem[{Doberkat(2007{\natexlab{b}})}]{doberkat2007stochastic}
\textsc{\bibinfo{author}{E.E. Doberkat}}, ``\bibinfo{title}{Stochastic
  Relations: Foundations for {M}arkov Transition Systems}'', Chapman \&
  Hall/CRC Studies in Informatics Series, \bibinfo{publisher}{Taylor \&
  Francis} (\bibinfo{year}{2007}).
\bibitem[{Doberkat(2009)}]{doberkat2009stochastic}
\textsc{\bibinfo{author}{E.E. Doberkat}}, ``\bibinfo{title}{Stochastic
  Coalgebraic Logic}'', Monographs in theoretical computer science,
  \bibinfo{publisher}{Springer} (\bibinfo{year}{2009}).
\bibitem[{Edalat(1999)}]{Edalat}
\textsc{\bibinfo{author}{A.~Edalat}}, \bibinfo{title}{Semi-pullbacks and
  bisimulation in categories of {M}arkov processes},
  \textit{\bibinfo{journal}{Mathematical Structures in Comp. Sci.}}
  \textbf{\bibinfo{volume}{9}}: \bibinfo{pages}{523--543}
  (\bibinfo{year}{1999}).
\bibitem[{Kechris(1994)}]{Kechris}
\textsc{\bibinfo{author}{A.S. Kechris}}, ``\bibinfo{title}{Classical
  Descriptive Set Theory}'', \bibinfo{series}{Graduate Texts in Mathematics}
  \textbf{\bibinfo{volume}{156}}, \bibinfo{publisher}{Springer-Verlag}
  (\bibinfo{year}{1994}).
\bibitem[{Kechris(2011)}]{Kechris-errata}
\textsc{\bibinfo{author}{A.S. Kechris}}, \bibinfo{title}{{C}lassical
  {D}escriptive {S}et {T}heory; corrections and updates},
  \bibinfo{howpublished}{Webpage},  (\bibinfo{year}{2011}).
  \bibinfo{note}{{\texttt{http://www.math.caltech.edu/papers/CDST-corrections.%
pdf}}}.
\bibitem[{Kracht(1999)}]{kracht1999tools}
\textsc{\bibinfo{author}{M.~Kracht}}, ``\bibinfo{title}{Tools and techniques in
  modal logic}'', Studies in logic and the foundations of mathematics,
  \bibinfo{publisher}{Elsevier} (\bibinfo{year}{1999}).
\bibitem[{Larsen and Skou(1991)}]{LarsenSkou}
\textsc{\bibinfo{author}{K.G. Larsen}, \bibinfo{author}{A.~Skou}},
  \bibinfo{title}{Bisimulation through probabilistic testing},
  \textit{\bibinfo{journal}{Inf. Comput.}} \textbf{\bibinfo{volume}{94}}:
  \bibinfo{pages}{1--28} (\bibinfo{year}{1991}).
\bibitem[{Moschovakis(2009)}]{moschovakisDesc}
\textsc{\bibinfo{author}{Y.N. Moschovakis}}, ``\bibinfo{title}{Descriptive Set
  Theory}'', Mathematical Surveys and Monographs, \bibinfo{publisher}{American
  Mathematical Society} (\bibinfo{year}{2009}), \bibinfo{edition}{2} edition.
\bibitem[{Rutten(2000)}]{Rutten00}
\textsc{\bibinfo{author}{J.J.M.M. Rutten}}, \bibinfo{title}{Universal
  coalgebra: a theory of systems}, \textit{\bibinfo{journal}{Theor. Comput.
  Sci.}} \textbf{\bibinfo{volume}{249}}: \bibinfo{pages}{3--80}
  (\bibinfo{year}{2000}).
\bibitem[{S{\'a}nchez~Terraf(2011)}]{Pedro20111048}
\textsc{\bibinfo{author}{P.~S{\'a}nchez~Terraf}}, \bibinfo{title}{Unprovability
  of the logical characterization of bisimulation},
  \textit{\bibinfo{journal}{Information and Computation}}
  \textbf{\bibinfo{volume}{209}}: \bibinfo{pages}{1048--1056}
  (\bibinfo{year}{2011}).
\bibitem[{Scott(1964)}]{Scott64}
\textsc{\bibinfo{author}{D.~Scott}}, \bibinfo{title}{Invariant {B}orel sets},
  \textit{\bibinfo{journal}{Fund. Math.}} \textbf{\bibinfo{volume}{56}}:
  \bibinfo{pages}{117--128} (\bibinfo{year}{1964}).
\bibitem[{Srivastava(2008)}]{books/daglib/0029964}
\textsc{\bibinfo{author}{S.M. Srivastava}}, ``\bibinfo{title}{A Course on
  {B}orel Sets}'', \bibinfo{series}{Graduate texts in mathematics}
  \textbf{\bibinfo{volume}{180}}, \bibinfo{publisher}{Springer}
  (\bibinfo{year}{2008}).
\bibitem[{Wolovick(2012)}]{Wolovick}
\textsc{\bibinfo{author}{N.~Wolovick}}, ``\bibinfo{title}{Continuous
  Probability and Nondeterminism in Labeled Transition Systems}'', Ph.D.
  thesis, Universidad Nacional de C\'ordoba (\bibinfo{year}{2012}).

\end{thebibliography}
\providecommand{\noopsort}[1]{}
\begin{small}\end{small}

\bigskip

\begin{small}
\begin{quote}
CIEM --- Facultad de Matem\'atica, Astronom\'{\i}a y F\'{\i}sica 
(Fa.M.A.F.) 

Universidad Nacional de C\'ordoba --- Ciudad Universitaria

C\'ordoba 5000. Argentina.
\end{quote}
and
\begin{quote}
Algebraische und Logische Grundlagen der Informatik

Institut f\"ur
Theoretische Informatik

Technische Universit\"at Dresden --- Fakult\"at Informatik 

01062 Dresden
\end{quote}
\end{small}
\end{document}